\documentclass{amsart}
\RequirePackage[colorlinks,citecolor=blue,urlcolor=blue,linkcolor=blue]{hyperref}
\hypersetup{
	colorlinks = true,
	citecolor=blue,
	urlcolor=blue,
	linkcolor=blue,
	pdfpagemode = UseNone
}

\usepackage{graphicx,hyperref,xcolor}
\usepackage{enumerate}
\usepackage{geometry,soul,fancybox}
\usepackage{tikz}
\usetikzlibrary{patterns}
\newtheorem{theorem}{Theorem}[section]

\theoremstyle{definition}

\newtheorem{remark}[theorem]{Remark}

\numberwithin{equation}{section}

\usepackage{framed,pgf}
\newcounter{oldeq}
\newcounter{usesofarxiv}

\begin{document}
		\date{}
		\title[GIG random matrices and  a Yang-Baxter extension of the Matsumoto-Yor property]{GIG random matrices  \\ and \\  a Yang-Baxter extension of the Matsumoto-Yor property}
		\author{G. Letac, M. Piccioni, J. Wesołowski}%
		\keywords{Matsumoto-Yor property, Yang-Baxter maps, independence preserving maps, matrix variate generalized inverse Gaussian distribution,  characterization of probability laws on symmetric positive definite matrices, matrix functional equations}
		\address{G\'erard Letac, Institute de Math\'ematiques, Universit\'e de Toulouse, 31062 Toulouse, France;T\'eSA, 7 Bd de la Gare, 31500 Toulouse}
		\email{gerard.letac@math.univ-toulouse.fr}
		\address{Mauro Piccioni, Dipartimento di Matematica, Sapienza Universit\`a di Roma, Piazzale Aldo Moro 5, 00185 Roma, Italy.}
		\email{mauropiccioni129@gmail.com}
		\address{Jacek Weso{\l}owski, Wydzia{\l} Matematyki i Nauk Informacyjnych,	Politechnika Warszawska, ul. Koszykowa 75, 00-662 	Warszawa, Poland; G\l \'owny Urz\c ad Statystyczny, Al. Niepodleg{\l}o\'sci 208, 00-925 Warszawa}
		\email{jacek.wesolowski@pw.edu.pl}
		%\email{}%
		%
		%\thanks{}%
		%\subjclass{}%
		%\keywords{}%
		%
		%%\date{}%
		%%\dedicatory{}%
		%%\commby{}%
		%% ----------------------------------------------------------------
		
		% ----------------------------------------------------------------

		\begin{abstract}
					Sasada and Uozumi, \cite{SasUoz2024},  identified independence preserving $[2:2]$  quadrirational parametric Yang-Baxter maps, see \eqref{YBEQ}, on $(0,\infty)$. In particular, the map denoted there by $H_{III,B}^{(\alpha,\beta)}$, see \eqref{CS}, was connected to the independence preserving property of the GIG distributions on $(0,\infty)$. Remarkably, the property appears also naturally in probabilistic   integrable models of   discrete  Korteweg de Vries type, as observed by Croydon and Sasada,  \cite{CroSas2020}. In the case of $(\alpha,\beta)=(1,0)$ the independence  reduces to the classical Matsumoto-Yor property, \cite{MatYor2001}. In \cite{LetWes2024} we proposed an extension of $H_{III,B}^{(\alpha,\beta)}$ to a map on the cone of symmetric positive definite matrices of a fixed dimension, showing that such  extended map preserves independence of GIG random matrices. In the present paper we prove two results: (i)  the matrix GIG distributions are characterized by the independence property governed by this map;  (ii) the matrix variate extension of $H_{III,B}^{(\alpha,\beta)}$ we use,  is a parametric Yang-Baxter map.
		\end{abstract}
		
		\maketitle
		%\tableofcontents
			\section{{\bf INTRODUCTION}}
		Recall that a random variable $X$ has the generalized inverse Gaussian distribution, $\mathrm{GIG}(\lambda,\alpha,\beta)$, where $\lambda\in \mathbb R$ and $\alpha,\beta\in(0,\infty)$, if its density is of the form
		$$
		f(x)\propto x^{\lambda-1}\,e^{-\alpha x-\frac \beta x}\,\mathbf 1_{(0,\infty)}(x).
		$$
		Denote by $\Omega^{(r)}$ the Euclidean space of symmetric $r\times r$ matrices, with the inner product $\langle x,y\rangle=\mathrm{tr}(xy)$, $x,y\in\Omega^{(r)}$. By $\Omega^{(r)}_+$ denote the cone of positive definite elements of $\Omega^{(r)}$. 
		Since $r$ will be fixed in this paper we skip it in further notation, writing $\Omega$ and $\Omega_+$ instead. In this paper we will consider the matrix variate counterpart of $\mathrm {GIG}$ distribution, $\mathrm{MGIG}(\lambda,a,b)$, where $\lambda\in\mathbb R$, $a,b\in \Omega_+$, which is defined by the density
		\begin{equation}\label{mgig}
		f(x)\propto (\det x)^{\lambda-\frac{r+1}{2}}\,e^{-\langle a, x\rangle-\langle b, x^{-1}\rangle}\,\mathbf 1_{\Omega_+}(x).
		\end{equation}
		
		\bigskip
		The story of the problem we are interested in starts in the late 90's of the last century, when Matsumoto and Yor, while studying the conditional structure of the exponential Brownian motion, discovered the following independence preserving property for the GIG and gamma laws: If $(X,Y)\sim \mathrm{GIG}(-\lambda,\gamma_1,\gamma_2)\otimes\mathrm{Gamma}(\lambda,\gamma_1)$ and
		\begin{equation}\label{oMY}
		(U,V)=\left(\tfrac1{X+Y},\,\tfrac1X-\tfrac1{X+Y}\right)
		\end{equation}
		then $(U,V)\sim  \mathrm{GIG}(-\lambda,\gamma_2,\gamma_1)\otimes\mathrm{Gamma}(\lambda,\gamma_2)$. Here $\mathrm{Gamma}(\lambda,\gamma)$ distribution, $\lambda,\gamma>0$, is defined by the density $f(x)\propto x^{\lambda-1}e^{-\gamma x}\mathbf 1_{(0,\infty)}(x)$. The result appeared in \cite{MatYor2001}. 
		
		\bigskip
		A matrix variate version of the MY property was presented in \cite{LetWes2000}.  The role of the Gamma law was taken over by the Wishart distribution $\mathrm{W}(\lambda,c)$, $\lambda>\tfrac{r-1}{2}$, $c\in\Omega_+$, defined by the density $f(x)\propto (\det\,x)^{\lambda-\frac{r+1}{2}}\,e^{-\langle c,x\rangle}\,\mathbf 1_{\Omega_+}(x)$.  Actually, the  $\mathrm{W}(\lambda,c)$ is defined for $\lambda\in \Lambda=\left\{0,\tfrac12,\tfrac22,\ldots,\tfrac{r-1}{2}\right\}\cup\left(\tfrac{r-1}{2},\infty\right)$, but for $\lambda\le \tfrac{r-1}{2}$ it does not have a density and is defined on the closure $\bar\Omega_+$ of $\Omega_+$. The property reads: let $\lambda\in \Lambda$ and $a,b\in\Omega_+$. If $(X,Y)\sim\mathrm{MGIG}(-\lambda,a,b)\otimes \mathrm W(\lambda,a)$ and
		 \begin{equation}\label{mMY}
		 (U,V)=\left((X+Y)^{-1},\,X^{-1}-(X+Y)^{-1}\right)
		 \end{equation}
		then $(U,V)\sim\mathrm{MGIG}(-\lambda,b,a)\otimes \mathrm W(\lambda,b)$. In the same paper the authors proved a characterization of MGIG and Wishart distribution by the above independence preserving property, under the assumption that $X$ and $Y$ have densities which are of class $C_2$ and are strictly positive on $\Omega_+$. 
		
		Let us come back to the univariate case. An unexpected connection of the MY property with the discrete Korteweg de Vries (KdV) integrable model, was discovered  by Croydon and Sasada in \cite{CroSas2020}. These authors  observed that if $(X,Y)\sim\mathrm{GIG}(-\lambda,\alpha\gamma_1,\gamma_2)\otimes\mathrm{GIG}(-\lambda,\beta\gamma_2,\gamma_1)$ for $\lambda\in\mathbb R$, $\gamma_1,\gamma_2>0$, and
		\begin{equation}\label{CS}
		(U,V)=H_{III,B}^{(\alpha,\beta)}(X,Y)=\left(Y\tfrac{1+\beta XY}{1+\alpha XY},\,X\tfrac{1+\alpha XY}{1+\beta XY}\right),
		\end{equation}
		then $(U,V)\sim\mathrm{GIG}(-\lambda,\alpha\gamma_2,\gamma_1)\otimes\mathrm{GIG}(-\lambda,\beta \gamma_1,\gamma_2)$, relating this property in the nontrivial case of $\alpha\neq \beta$ to the modified discrete KdV  model on $\mathbb Z^2$. Here we used the symbol $H_{III,B}^{(\alpha,\beta)}$ following  Sasada and Uozumi,  \cite{SasUoz2024}, who identified this transformation  as belonging to a hierarchy of  parametric quadrirational Yang-Baxter map (YB) preserving independence property - see \eqref{YBEQ} for the definition of YB maps. 
		
				\bigskip
		It is elementary to check that if $W\sim \mathrm{GIG}(\kappa,p,q)$ then $\tfrac1W\sim \mathrm{GIG}(-\kappa,q,p)$. Consequently, writing $1/Y$ instead of $Y$ and $1/V$ instead of $V$ in \eqref{CS} one obtains another presentation of the same independence property, getting a form which is more convenient for the purposes of the present paper:
		
		If $(X,Y)\sim \mathrm{GIG}(-\lambda,\alpha\gamma_1,\gamma_2)\otimes\mathrm{GIG}(\lambda,\gamma_1,\beta\gamma_2)$ and
		\begin{align}\label{GIG1}
		(U,V)=&\left(\tfrac{\beta X+Y}{Y(\alpha X+Y)},\,\tfrac{\beta X+Y}{X(\alpha X+Y)}\right)\\
		=&\left(\tfrac \beta\alpha Y^{-1}+\left(1-\tfrac \beta\alpha\right)(\alpha X+Y)^{-1},\,X^{-1}+(\beta-\alpha)(\alpha X+Y)^{-1}\right)\nonumber
		\end{align}
		then $(U,V)\sim\mathrm{GIG}(-\lambda,\alpha\gamma_2,\gamma_1)\otimes\mathrm{GIG}(\lambda,\gamma_2,\beta \gamma_1)$. 
		%Note that \eqref{GIG1} does not define a YB map any more. 
		For further use we observe that this last  independence property  is equivalent to the fact that the quadruple of  densities $\mathfrak f_X$, $\mathfrak f_Y$, $\mathfrak f_U$ and $\mathfrak f_V$ (in case $X$, $Y$, and thus $U$ and $V$, have densities) satisfies the  functional equation 
		\begin{equation}\label{eff}
			 \mathfrak d_{U}\left(\tfrac{\tau+\beta \sigma}{\tau+\alpha \sigma}\tfrac{1}{\tau}\right)\, \mathfrak d_{V}\left(\tfrac{\tau+\beta \sigma}{\tau+\alpha \sigma}\tfrac{1}{\sigma}\right)\,\left(\tfrac{\tau+\beta\sigma}{\tau\sigma(\tau+\alpha\sigma)}\right)^2=\, \mathfrak d_{X}(\sigma)\ \mathfrak d_{Y}(\tau),\quad \sigma,\tau\in(0,\infty),\quad \tau,\sigma>0,
		\end{equation}
		  for $ \mathfrak d_{W}= \mathfrak f_{W}$, $W\in\{X,Y,U,V\}$. 
		  
		  \bigskip
		  Note also that for $\alpha=1$ and $\beta=0$ this independence property  for $(U,V)$ defined in \eqref{GIG1}  reduces to the original MY property \eqref{oMY}: see the second line of \eqref{GIG1}. In this context it is interesting to note that \eqref{CS} for $\beta=0\neq \alpha$ refers just to the straight discrete KdV model, see e.g. \cite{CroSasTsu2022}. 
		  
		  In \cite{CroSas2020} the authors conjectured that the above independence preserving property with $(U,V)$ defined by \eqref{CS}   characterizes GIG laws. Under assumptions that the distributions of $X$, $Y$ have strictly positive and twice differentiable densities the conjecture was proved in \cite{BaoNoa2021}. The complete solution with no assumptions regarding existence of densities was given in \cite{LetWes2024}. 
		
		\bigskip
		In \cite{LetWes2024}, extending the matrix variate version of the MY property from \cite{LetWes2000},  the modified KdV property  was established for random matrices as follows:
		
		\bigskip
		\begin{theorem}\cite{LetWes2024}\label{direc}
		Let $X$ and $Y$ be independent $\Omega_+$-valued random matrices with distributions $\mathrm{MGIG}(\lambda,\alpha a,b)$ and $\mathrm{MGIG}(\lambda,\beta b,a)$, respectively. Let $\phi^{(\alpha,\beta)}:\Omega_+^2\to\Omega_+^2$ be defined by 
		\begin{equation}\label{faj}
			\phi^{(\alpha,\beta)}(x,y)=\left(y(I+\alpha xy)^{-1}(I+\beta xy),\,x(I+\beta yx)^{-1}(I+\alpha yx)\right)
		\end{equation}
		and let
		$$
		\left( U,\, V\right)=\phi^{(\alpha,\beta)}(X,Y)
.		$$

\vspace{1mm}
		Then $U$ and $ V$  are independent, $ U\sim\mathrm{MGIG}(\lambda,\alpha b,a)$ and $ V=\mathrm{MGIG}(\lambda,\beta a,b)$.
		\end{theorem}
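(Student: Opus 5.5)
The plan is a direct change of variables. We compute the joint density of $(U,V)$ from that of $(X,Y)$ and check that it factorizes into the two claimed MGIG densities.

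\textbf{Step 1: the map is an involution.} First we check that $\phi^{(\alpha,\beta)}$ is a bijection of $\Omega_+^2$, and in fact that it is its own inverse. Write $u=y(I+\alpha xy)^{-1}(I+\beta xy)$ and $v=x(I+\beta yx)^{-1}(I+\alpha yx)$. The key identity is
$$uv=y(I+\alpha xy)^{-1}(I+\beta xy)\,x\,(I+\beta yx)^{-1}(I+\alpha yx).$$
Using the push-through relation $(I+\beta xy)x=x(I+\beta yx)$, this collapses to $uv=y(I+\alpha xy)^{-1}x(I+\alpha yx)=yx$. By symmetry $vu=xy$. Substituting into $\phi(u,v)=\bigl(v(I+\alpha uv)^{-1}(I+\beta uv),\dots\bigr)$ gives
$$v(I+\alpha yx)^{-1}(I+\beta yx)=x(I+\beta yx)^{-1}(I+\alpha yx)(I+\alpha yx)^{-1}(I+\beta yx)=x,$$
and similarly the second coordinate returns $y$. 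We also need $u$ to be symmetric positive definite. Writing $u=(y^{-1}+\alpha x)^{-1}(y^{-1}+\beta x)\,y\,y^{-1}$, the cleanest route is the form
$$u=\tfrac{\beta}{\alpha}y+\bigl(1-\tfrac{\beta}{\alpha}\bigr)\,(y^{-1}+\alpha x)^{-1},$$
by analogy with \eqref{GIG1}. Symmetry is then immediate. Positivity follows because $u=(y^{-1}+\alpha x)^{-1}(I+\beta xy)$ is similar to the product $(y^{-1}+\alpha x)^{-1/2}(y^{-1}+\beta x)\cdots$ of positive operators, so its spectrum is positive.

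\textbf{Step 2: the Jacobian.} Since $\phi$ is an involution, $|J_\phi(u,v)|\,|J_\phi(x,y)|=1$. Rather than compute it directly, I would factor $\phi$ into simple maps whose Jacobians on $\Omega$ are known:
\begin{itemize}
\item $x\mapsto x^{-1}$, with Jacobian $(\det x)^{-(r+1)}$;
\item $x\mapsto axa$, with Jacobian $(\det a)^{r+1}$;
\item translations and scalings.
\end{itemize}
For instance, fix $x$ and consider $y\mapsto u$, which is built from $y\mapsto y^{-1}+\alpha x\mapsto(\cdot)^{-1}$ and affine operations. For the $v$-coordinate, use the triangular structure $(x,y)\mapsto(x,u)\mapsto(v,u)$, where in the second step $v$ is expressed through $x$ and $u$ via $vu=xy$ together with the explicit formulas. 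I expect the total Jacobian to be a ratio of powers of $\det x,\det y,\det u,\det v$, namely
$$\Bigl(\frac{\det u\,\det v}{\det x\,\det y}\Bigr)^{(r+1)/2}\quad\text{(up to inversion).}$$
This is consistent with $\det(uv)=\det(xy)$, and then the power of the determinant actually drops out.

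\textbf{Step 3: the exponents.} Since $\det u\det v=\det x\det y$, the factor $(\det x\det y)^{\lambda-\frac{r+1}{2}}$ in the joint density of $(X,Y)$ transfers to $(\det u\det v)^{\lambda-\frac{r+1}{2}}$. It then remains to prove the trace identity
$$\alpha\langle a,x\rangle+\langle b,x^{-1}\rangle+\beta\langle b,y\rangle+\langle a,y^{-1}\rangle=\alpha\langle b,u\rangle+\langle a,u^{-1}\rangle+\beta\langle a,v\rangle+\langle b,v^{-1}\rangle$$
for all $a,b$. This amounts to two matrix identities:
$$\alpha x+y^{-1}=u^{-1}+\beta v,\qquad x^{-1}+\beta y=\alpha u+v^{-1}.$$
Each is checked by multiplying out with the formulas for $u$, $v$ and their inverses, $u^{-1}=(I+\beta xy)^{-1}(y^{-1}+\alpha x)$, and using push-through identities again.

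\textbf{Expected obstacle.} The main difficulty is the Jacobian in Step 2, because $\phi$ is not separable and matrix noncommutativity blocks the scalar computation. If the triangular decomposition is messy, a fallback is to compute the differential $d\phi$ at $(x,y)$ as a linear map on $\Omega^2$ and evaluate its determinant using the identities from Step 1. A second fallback is to reduce by invariance to the case where the determinant is computed through the decomposition $\phi=\sigma\circ\psi\circ\sigma^{-1}$, with $\sigma(x,y)=(x,y^{-1})$ mirroring \eqref{GIG1}, where the maps are built from inversions and additions only.
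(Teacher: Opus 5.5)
Your Steps 1 and 3 are correct, and they are the ingredients the paper relies on. The paper quotes Lemma 5.1 of \cite{LetWes2024}: $\phi^{(\alpha,\beta)}$ is an involution with $J_\phi=1$. From your step 1, $uv=yx$ and $vu=xy$ give the involution and $\det u\,\det v=\det x\,\det y$. The identities $\alpha x+y^{-1}=u^{-1}+\beta v$ and $x^{-1}+\beta y=\alpha u+v^{-1}$ do follow by push-through.

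The gap is Step 2, which is the only non-routine part. You never prove $|J_\phi|=1$; you only expect it, and the factorization you propose cannot deliver it. For fixed $x$, the map $y\mapsto u=\tfrac\beta\alpha y+(1-\tfrac\beta\alpha)(y^{-1}+\alpha x)^{-1}$ is a linear combination of $y$ and an inversion chain. It is not a composition of inversions, congruences and affine maps. Its differential is $h\mapsto\tfrac\beta\alpha h+(1-\tfrac\beta\alpha)MhM^{\top}$ with $M=(I+\alpha yx)^{-1}$. The determinant of this map on $\Omega$ is $\prod_{i\le j}\bigl(\tfrac\beta\alpha+(1-\tfrac\beta\alpha)\mu_i\mu_j\bigr)$, where the $\mu_i$ are the eigenvalues of $M$. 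In general this is not a product of powers of determinants. So bookkeeping with the Jacobians of $x\mapsto x^{-1}$, $x\mapsto axa$ and translations cannot produce the conditional Jacobians, and the cancellation you anticipate has to come from elsewhere. Your second fallback only relocates the problem: $\psi$ has the same sum structure, and the paper in fact derives $J_\psi$ from $J_\phi=1$, not the other way round.

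The missing idea, which makes your triangular route $(x,y)\mapsto(x,u)\mapsto(v,u)$ work, is that both conditional maps are congruences of a single spectral map. Set $g(t)=t\,\frac{1+\beta t}{1+\alpha t}$. Then $u=x^{-1/2}g(x^{1/2}yx^{1/2})x^{-1/2}$. This also gives $u\in\Omega_+$ directly, which your similarity sketch does not quite establish.

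The two congruences contribute $(\det x)^{\pm(r+1)/2}$, which cancel, so $\det D_yu=J_g(x^{1/2}yx^{1/2})$. Here $J_g$ is the Jacobian of $t\mapsto g(t)$ on $\Omega_+$. It satisfies $|J_g(ktk^{\top})|=|J_g(t)|$ for orthogonal $k$, hence depends only on the spectrum of $t$, i.e. of $xy$.

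By the involution, $x=u^{-1/2}g(u^{1/2}vu^{1/2})u^{-1/2}$. So for fixed $u$, the Jacobian of $v\mapsto x$ is $J_g$ evaluated at a matrix with the spectrum of $uv=yx$, and the Jacobian of $x\mapsto v$ is its reciprocal. The two triangular factors therefore cancel and $|J_\phi|=1$. With this added, your Steps 1 and 3 complete the proof.
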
	
		
		\bigskip
		The main goal of the present paper is to prove a converse result, that is to prove that MGIG laws are characterized by the independence property from Theorem \ref{direc}. Of course, the question is meaningful only in case $\alpha\neq \beta$ since  $\phi^{(\alpha,\alpha)}(x,y)=(y,x)$. To this end we will need technical asumptions of existence and smoothness properties of densities:
		
		\bigskip
		\begin{theorem}\label{chara} 
			Let $X$ and $Y$ be independent $\Omega_+$ valued random matrices with densities which are strictly positive  and twice differentiable on $\Omega_+$. Let $(U,V)=\phi^{(\alpha,\beta)}(X,Y)$ for some $\alpha\ne \beta$, where $\phi^{(\alpha,\beta)}$ is defined in \eqref{faj}. 
			
			Then there exist $\lambda\in\mathbb R$ and $a,b\in\Omega_+$ such that
			$$
			(X,Y)\sim \mathrm{MGIG}(\lambda,\alpha a,b)\otimes \mathrm{MGIG}(\lambda,\beta b,a)\quad\mbox{and}\quad 	(U,V)\sim \mathrm{MGIG}(\lambda,\alpha b,a)\otimes \mathrm{MGIG}(\lambda,\beta a,b)
			$$
			\end{theorem}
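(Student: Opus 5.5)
Following the route of \cite{LetWes2000} for the matrix Matsumoto--Yor characterization, I would take logarithms of the densities and turn the independence assumption into a functional equation on $\Omega_+^2$.

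\textbf{Step 1: the functional equation.} First I check that $\phi^{(\alpha,\beta)}$ is a diffeomorphism of $\Omega_+^2$ and compute its Jacobian $J(x,y)$. Its inverse is again of the same type: it should be $\phi^{(\beta,\alpha)}$ with the arguments swapped, as in the scalar case. I would obtain the Jacobian as a product of determinants of the form $\det(I+\alpha xy)^{\pm(r+1)}\det(I+\beta xy)^{\pm(r+1)}$, by writing the map as a composition of the elementary maps $x\mapsto x^{-1}$, $(x,y)\mapsto(x,y+cx)$ and $x\mapsto pxp$. This mirrors the second line of \eqref{GIG1}.

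Since $X,Y$ are independent, $U,V$ are independent, and all densities are positive and $C^2$. Setting $g_W=\log f_W$, the matrix analogue of \eqref{eff} gives
$$g_U(u)+g_V(v)+\log J(x,y)=g_X(x)+g_Y(y),\qquad (u,v)=\phi^{(\alpha,\beta)}(x,y).$$

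\textbf{Step 2: separating variables.} To decouple, I would apply mixed second-order differential operators in $x$ and $y$, for instance $\partial_x\partial_y$ applied to both sides. The right side is then killed, leaving a linear relation between $g_U''$ and $g_V''$ evaluated at the transformed points and composed with explicit rational matrix functions. Alternatively, I would use the coordinates $s=x^{-1}$ and $t=\alpha x+y$, in the spirit of \eqref{GIG1}. In the case $\alpha\neq0$, the map should become affine-plus-inversion in these variables, namely $u=\tfrac\beta\alpha y^{-1}+(1-\tfrac\beta\alpha)t^{-1}$ with a suitable matrix analogue, and $v=x^{-1}+(\beta-\alpha)t^{-1}$ modulo noncommutativity. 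This is exactly the structure handled in \cite{LetWes2000}.

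The aim is to show that $x\mapsto g_X(x)-\langle \cdot\rangle$ has a Hessian of a special form. Concretely, I want to prove that the gradients satisfy
$$\nabla g_X(x)=-A+x^{-1}Bx^{-1}+c\,x^{-1},$$
with constant matrices $A,B$ and a scalar $c$ (and similarly for $g_Y$, $g_U$, $g_V$). This is the gradient of $\log$ of an MGIG density. The argument would proceed by showing that the second derivatives solve a Cauchy-type matrix equation whose only $C^0$ solutions are linear.

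\textbf{Step 3: identifying the parameters.} Once each density is shown to be MGIG, I plug these forms back into the functional equation and compare the coefficients of $\langle\cdot,x\rangle$, $\langle\cdot,x^{-1}\rangle$ and $\log\det$. This matching should force the parameters to take the stated form $(\lambda,\alpha a,b)$, $(\lambda,\beta b,a)$ with a common $\lambda$. Theorem \ref{direc} then identifies the laws of $U$ and $V$.

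\textbf{Main obstacle.} The hardest step is Step 2: the noncommutativity of $x$ and $y$ in $(I+\alpha xy)^{-1}(I+\beta xy)$ makes the differentiation messy. To handle it, I would first differentiate in the direction $y\to y+h$ at special points where $x$ and $y$ commute, for instance $y$ a polynomial in $x$. From there I would extract a matrix equation for $\nabla g$, extend it by an invariance argument under conjugation $x\mapsto pxp^T$, and finally use the $C^2$ hypothesis to conclude the constancy of the relevant matrices.
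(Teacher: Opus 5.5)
There is a genuine gap. Step 2 carries the whole content of the theorem, and the tools you name there do not get you through it.

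Applying $D_yD_x$ to $g_U(u)+g_V(v)=g_X(x)+g_Y(y)$ does kill the right-hand side. (Note that $\log J\equiv 0$, since $\phi^{(\alpha,\beta)}$ is an involution with unit Jacobian.) But $u,v$ are nonlinear in $(x,y)$, so what remains mixes $g_U',g_U''$ at $u$ with $g_V',g_V''$ at $v$. Nothing relates $g_U$ to $g_V$, and neither function has any a priori homogeneity. Specialising to commuting points such as $y=\rho x$ therefore gives, for each $\rho$, one relation between two unknown functions at two $\rho$-dependent points, and you offer no way to close this system.

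Your fallback does not help either. The claim that in the coordinates $(x^{-1},\alpha x+y)$ the map becomes affine-plus-inversion, ``exactly the structure handled in \cite{LetWes2000}'', is false for $\beta\neq0$. In \eqref{UV1}, which is the form obtained after replacing $Y,V$ by their inverses, $U$ still contains $\tfrac\beta\alpha Y^{-1}$, and $Y^{-1}=(t-\alpha s^{-1})^{-1}$ is not of that type; only $\beta=0$ gives the Matsumoto--Yor map. Nor is there a usable invariance under $x\mapsto pxp^T$. The map $\phi^{(\alpha,\beta)}$ is equivariant only under $(x,y)\mapsto(pxp^T,p^{-T}yp^{-1})$, and this action replaces the unknown densities by other unknown densities. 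It therefore cannot transport an identity for a fixed $g_W$ from special points to general ones.

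The missing idea is the decoupling mechanism the paper uses. First restrict \eqref{gs} to rays $(x,y)=(\sigma w,\tau w)$, where everything commutes. Multiply by auxiliary GIG densities so that the slices are integrable for almost every $w$, and invoke the univariate characterization of \cite{LetWes2024}, which needs no regularity. This gives \eqref{hu}--\eqref{hy}: along every ray all four log-densities are of GIG type with the \emph{same} parameter functions $\mathfrak a(w),\mathfrak b(w),\mathfrak l(w)$, and these satisfy the homogeneity relations \eqref{LIN1}--\eqref{LIN7}.

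Next substitute $(\rho x,\rho y)$ into \eqref{gs} and use the linear independence of $1,\rho,\rho^{-1},\log\rho$. This splits the equation into \eqref{as}--\eqref{lambda}, each involving a single unknown function. Homogeneity then lets the values at $u^{-1},v^{-1}$ be expressed through the value at $x$ when $y=\rho x$. Only at this point does your differentiate-and-specialise idea work: for $\mathfrak a$ it yields $\tfrac{\rho^2-\alpha\beta}{(\beta+\rho)^3}\,\tfrac{\beta+\rho}{\alpha+\rho}\,\mathfrak a''(x)=0$.

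Even then, commuting points do not suffice for $\mathfrak l$. At $y=x$ one only obtains \eqref{l''}, which $\kappa\log\det x$ satisfies. To conclude, the paper needs the $\rho\to\infty$ identity \eqref{trr}, valid at non-commuting pairs, together with homogeneity.

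A smaller point: independence of $U$ and $V$ is a hypothesis, not a consequence of the independence of $X$ and $Y$. It is omitted in the displayed statement but explicit in Theorem \ref{charac}.
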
	
			
			\bigskip
			The proof of Theorem \ref{chara} is given in Section 2. Actually,  for convenience we will state and prove there an equivalent result, Theorem \ref{charac}. A similar equivalent approach appeared to be  convenient  in the univariate case, see  \cite{LetWes2024}.
		
		\bigskip
		The map  $\phi^{(\alpha,\beta)}$ is a direct extension of  $H_{III,B}^{(\alpha,\beta)}$ from the domain $(0,\infty)^2$ to $\Omega_+^2$, that is from a commutative setting to a non-commutative one.   Since $H_{III,B}^{(\alpha,\beta)}$  is a parametric YB map, see Theorem 2 in \cite{PSTV2010},   it is natural  to ask if $\phi^{(\alpha,\beta)}$ defined in \eqref{faj} is a YB map on $\Omega_+^2$. It appears that it really is: 
		
		\bigskip
		\begin{theorem}\label{YBTH}
			The function $\phi^{(\cdot,\cdot)}=\left(\phi_1^{(\cdot,\cdot)},\,\phi_2^{(\cdot,\cdot)}\right)$ is a parametric Yang-Baxter map, that is
			\begin{equation}\label{YBEQ}
				F_{12}^{(\alpha,\beta)}\circ F_{13}^{(\alpha,\gamma)}\circ F_{23}^{(\beta,\gamma)}=F_{23}^{(\beta,\gamma)}\circ F_{13}^{(\alpha,\gamma)}\circ F_{12}^{(\alpha,\beta)},
			\end{equation}	
			with 
			$$
			\left.\begin{cases}
				F_{12}^{(\alpha,\beta)}(x,y,z)=\left(\phi_1^{(\alpha,\beta)}(x,y),\,\phi_2^{(\alpha,\beta)}(x,y),z\right)\\
				F_{13}^{(\alpha,\gamma)}(x,y,z)=\left(\phi_1^{(\alpha,\gamma)}(x,z),\,y,\,\phi_2^{(\alpha,\gamma)}(x,z)\right)\\
				F_{23}^{(\beta,\gamma)}(x,y,z)=\left(x,\,\phi_1^{(\beta,\gamma)}(y,z),\phi_2^{(\beta,\gamma)}(y,z)\right)
			\end{cases}
			\right\}
			\quad x,y,z\in\Omega_+.
			$$
		\end{theorem}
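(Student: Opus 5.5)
The plan is to avoid expanding the compositions in \eqref{YBEQ} directly. Instead I would encode $\phi^{(\alpha,\beta)}$ by two algebraic relations and then use a Lax (refactorization) argument in the style of Veselov, carried out over $2\times2$ block matrices with entries in the (non-commutative) algebra of $r\times r$ matrices.

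\emph{Step 1: two invariants.} Write $(u,v)=\phi^{(\alpha,\beta)}(x,y)$. The push-through identity $y(I+\alpha xy)^{-1}=(I+\alpha yx)^{-1}y$ gives $u=(I+\alpha yx)^{-1}(I+\beta yx)\,y$. Multiplying by $v$ on the left, everything cancels:
$$vu=xy .$$
Next, $u^{-1}=(I+\beta xy)^{-1}(y^{-1}+\alpha x)$, so $u^{-1}+\beta\, xy\,u^{-1}=y^{-1}+\alpha x$. Substituting $xy\,u^{-1}=v$ yields
$$u^{-1}+\beta v=y^{-1}+\alpha x .$$
Conversely, these two relations determine $(u,v)$ uniquely in $\Omega_+^2$. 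Indeed, from $v=xyu^{-1}$ we get $(I+\beta xy)u^{-1}=y^{-1}+\alpha x$, and $I+\beta xy$ is invertible since $xy$ has positive spectrum. So $\phi^{(\alpha,\beta)}$ is characterized by this pair of relations.

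\emph{Step 2: a Lax matrix.} I would look for a block matrix $L(x;\alpha,\zeta)$, affine in a spectral parameter $\zeta$, such that the refactorization identity
$$L(u;\beta,\zeta)\,L(v;\alpha,\zeta)=L(x;\alpha,\zeta)\,L(y;\beta,\zeta)\quad\text{for all }\zeta$$
is equivalent to the two relations of Step 1. A natural first try is
$$L(x;\alpha,\zeta)=\begin{pmatrix} x & \zeta I\\ I & \alpha I\end{pmatrix}$$
or its variant with $x^{-1}$ in a corner. I would adjust the placement of $\alpha$, $x^{\pm1}$ and the ordering of factors until, after comparing coefficients of powers of $\zeta$, the product of the diagonal entries reproduces $vu=xy$ and the off-diagonal entries reproduce $u^{-1}+\beta v=y^{-1}+\alpha x$. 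For the scalar reduction, one can check against the known Lax pair of $H_{III,B}$ in \cite{PSTV2010}.

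\emph{Step 3: YB via uniqueness.} Apply both sides of \eqref{YBEQ} to $(x,y,z)$. Each application of an $F_{ij}$ replaces two adjacent factors of the triple product $L(x;\alpha)L(y;\beta)L(z;\gamma)$ by their refactorization. The left side and the right side therefore both produce triples $(x',y',z')$ with
$$L(x';\alpha)L(y';\beta)L(z';\gamma)=L(x;\alpha)L(y;\beta)L(z;\gamma).$$
Here the ordering conventions (which factor carries which parameter after each swap) must be tracked carefully, since the matrices do not commute. It then remains to show that such a triple is unique among $(x',y',z')\in\Omega_+^3$ with the prescribed parameter order. I would prove this by expanding the product in powers of $\zeta$. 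This gives, for instance, $x'y'z'=xyz$ together with linear relations in $x'^{-1},y',z'$ and their inverses. From these one solves successively, at each step using invertibility of a matrix of the form $I+c\,w$ with $w$ having positive spectrum, exactly as in Step 1.

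The main obstacle is Step 2: finding the correct non-commutative Lax matrix, and checking that its refactorization is equivalent to, not merely implied by, the relations of Step 1. Step 3 also needs refactorization uniqueness for three factors, which in the non-commutative setting is not automatic.

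If Step 2 resists, the fallback is a direct computation. Use the relations of Step 1 to write both sides of \eqref{YBEQ} componentwise. Both sides preserve $x^{-1}$-type linear combinations and the ordered product, so it suffices to compare, say, the third and first components. Each of these is a rational expression in $x,y,z$, and I would verify equality using push-through identities only.
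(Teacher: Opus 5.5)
\textbf{Verdict: there is a genuine gap.} Your Step 1 is correct. A Lax/refactorization proof would also be a genuinely different route from the paper, which checks $x_3=X_3$, $z_3=Z_3$ and $y_3=Y_3$ by direct push-through manipulations. As written, however, the proposal does not prove the theorem. The two steps that carry all the content are left open, and the sketches you give for them would fail.

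\emph{Step 2.} Your trial matrix $\begin{pmatrix}x&\zeta I\\ I&\alpha I\end{pmatrix}$ gives a product $L(x;\alpha)L(y;\beta)$ that records only $xy$, $x+\beta I$ and $y+\alpha I$. Any refactorization therefore just shifts $x$ and $y$ by multiples of $I$, and cannot encode $\phi$. More seriously, the pattern $L(u;\beta)L(v;\alpha)=L(x;\alpha)L(y;\beta)$ cannot hold for $\phi$ whatever $L$ is. At $\alpha=\beta$ the map $\phi$ is the flip, so this identity would force $L(x;\alpha)$ and $L(y;\alpha)$ to commute for all $x,y$. It also attaches $\beta$ to $u$, whereas in \eqref{YBEQ} the first slot keeps $\alpha$ throughout.

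\emph{Step 3.} Uniqueness of the trifactorization is precisely the non-commutative content of Veselov's argument. Solving ``successively'' from the ordered product and the linear relations does not deliver it. Already for $r=1$, take the Lax matrix given below and $(\alpha,\beta,\gamma)=(1,2,3)$. The triples $(x',y',z')=(1,1,1)$ and $(\tfrac{11}{5},\tfrac{65}{77},\tfrac{7}{13})$ have the same values of $z'y'x'$, of $\gamma z'+\alpha x'+y'^{-1}$, and of $\beta y'+x'^{-1}+z'^{-1}$.

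\textbf{How the route can be completed.} Take $L(x;\alpha,\zeta)=\begin{pmatrix}\alpha x&\zeta I\\ I&x^{-1}\end{pmatrix}$ and the refactorization $L(x;\alpha)L(y;\beta)=L(v;\beta)L(u;\alpha)$. Its four blocks state $vu=xy$, $u^{-1}+\beta v=y^{-1}+\alpha x$, $v^{-1}+\alpha u=x^{-1}+\beta y$ and $uv=yx$. All four hold for $(u,v)=\phi^{(\alpha,\beta)}(x,y)$: the third is proved like your second relation, and the fourth is the transpose of the first. Tracking both sides of \eqref{YBEQ} then gives final triples $(x',y',z')\in\Omega_+^3$ with $L(z';\gamma)L(y';\beta)L(x';\alpha)=L(x;\alpha)L(y;\beta)L(z;\gamma)$.

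For uniqueness you must use a quadratic coefficient. Set $P=z'y'x'$, which is read off from the $\zeta^0$ term $(x'y'z')^{-1}$ of the $(2,2)$ block. Set $Q=\gamma z'+\alpha x'+y'^{-1}$, and let $T=\beta\gamma z'y'+\gamma z'x'^{-1}+y'^{-1}x'^{-1}$ be the $\zeta$-coefficient of the $(1,2)$ block. Then $(T+\alpha I)x'=\beta\gamma P+Q=(I+\gamma z'y')\,y'^{-1}(I+gx')$. Here $g=y'^{1/2}f(y'^{1/2}z'y'^{1/2})y'^{1/2}$ is positive semidefinite, with $f(t)=\frac{\alpha+\beta\gamma t}{1+\gamma t}$. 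This factorization is where your ``$I+cw$'' invertibility idea really enters, and it pins down $x'$. Likewise $z'(T'+\gamma I)=\alpha\beta P+Q$, where $T'$ is the $\zeta^0$ term of the $(2,1)$ block, pins down $z'$. Finally $y'=z'^{-1}Px'^{-1}$.

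Your fallback remark is sound. Chaining $vu=xy$ shows that both sides preserve $z'y'x'=xyz$. So the paper's page-long check of $y_3=Y_3$ becomes redundant once $x_3=X_3$ and $z_3=Z_3$ are known. But the fallback computation of those two components is only announced, not carried out.
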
	
		\bigskip
		The proof of Theorem \ref{YBTH} is given in the Appendix. 
		
	%In view of Theorem \ref{YBTH} the map  $\phi^{(\alpha,\beta)}$ can be seen as an extension of  $H_{III,B}^{(\alpha,\beta)}$, a function on $(0,\infty)^2$, to a function on $\Omega_+^2$, which preserves the Yang-Baxter property. 
		
%	Our main goal in this paper is to prove the converse of Theorem \ref{direc}, i.e. to characterize the matrix GIG distributions by the independence preserving property from this theorem. This is done in Theorem  \ref{charac}. To this end we will need the technical assumptions that the densities of $X$ and $Y$ are strictly positive on $\Omega_+$ and that they are twice differentiable.
		
	\bigskip	
	\section{{\bf CHARACTERIZATION AND ITS PROOF}}
	Similarly as in the univariate case, for the matrix variate MGIG variable, see \eqref{mgig}, the following property holds: if $W\sim\mathrm{MGIG}(\kappa,c,d)$ then $W^{-1}\sim \mathrm{MGIG}(-\kappa,d,c)$. Consequently, exactly following the transition  in the univariate case  from presentation  \eqref{CS} to presentation \eqref{GIG1} by taking $Y$ in place of $Y^{-1}$ and $V$ in place of $V^{-1} $,  we see that  Theorem \ref{chara} is equivalent to Theorem \ref{charac} below. It is clear that  independence properties of $X$ and $Y$ as well as of $U$ and $V$ of Theorem \ref{chara} are preserved under these transformations of $Y$ and $V$.
	
	\bigskip
	\begin{theorem}\label{charac}
		Assume that $X$ and $Y$ are independent random matrices valued in $\Omega_+$ having strictly positive twice differentiable densities on $\Omega_+$. Let $\alpha,\beta\ge 0$ be two distinct numbers. Let 
		\begin{equation}\label{UV0}
		\begin{cases} U=(\alpha X+Y)^{-1}(\beta X+Y)Y^{-1}
		\\
			V=(\alpha X+Y)^{-1}(\beta X+Y)X^{-1}.
			\end{cases}
		\end{equation}
		
		\bigskip
		If $U$ and $V$ are independent then there exist $\lambda\in\mathbb R$ and $a,b\in\Omega_+$ such that
			\begin{equation}\label{XY}
			X\sim\mathrm{MGIG}(\lambda,\alpha a,b)\quad\mbox{and}\quad Y\sim\mathrm{MGIG}(-\lambda,a,\beta b)
		\end{equation}
		and
		\begin{equation}\label{UV}
		U\sim\mathrm{MGIG}(\lambda,\alpha b,a)\quad\mbox{and}\quad V\sim\mathrm{MGIG}(-\lambda,b,\beta a).
		\end{equation}
	\end{theorem}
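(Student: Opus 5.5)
Our approach is the classical Lukacs–Olkin–Rubin/Letac–Wesołowski strategy: convert the independence assumption into a functional equation for the log-densities and solve it by differentiation.

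\textbf{Step 1: the functional equation.} First we check that $\psi(x,y)=(u,v)$ given by \eqref{UV0} is a diffeomorphism of $\Omega_+^2$ onto itself and compute its inverse explicitly. Note that $u^{-1}=y(\beta x+y)^{-1}(\alpha x+y)$ and $v^{-1}=x(\beta x+y)^{-1}(\alpha x+y)$, and in the scalar case $(x,y)=\bigl(\tfrac{\tau+\beta\sigma}{\tau+\alpha\sigma}\tfrac1\sigma,\,\tfrac{\tau+\beta\sigma}{\tau+\alpha\sigma}\tfrac1\tau\bigr)$ with $(\sigma,\tau)$ playing the role of $(v,u)$. Hence the inverse has the same shape as $\psi$, up to the role of the arguments. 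We also compute the Jacobian, which should be a product $(\det u)^{-(r+1)/2}(\det v)^{-(r+1)/2}$ times a power of $\det(\alpha x+y)^{-1}(\beta x+y)$. The cleanest way is to factor $\psi$ as a composition of maps $x\mapsto x^{-1}$, $(x,y)\mapsto(x,\alpha x+y)$ and congruences, whose Jacobians are known.

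Independence of $(X,Y)$ and of $(U,V)$, together with strict positivity, then gives, with $g_W=\log f_W$,
$$
g_U(u)+g_V(v)=g_X(x)+g_Y(y)+\log J(x,y),\qquad (x,y)\in\Omega_+^2,
$$
which is the matrix analogue of \eqref{eff}.

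\textbf{Step 2: differentiate and separate.} We set $\Phi=g_X$, $\Psi=g_Y$, and similarly for $U,V$; all are $C^2$. Differentiating in $x$ and in $y$ gives gradient identities, and a mixed second derivative eliminates $g_X$ and $g_Y$ entirely. This leaves a relation between $\nabla g_U(u)$ and $\nabla g_V(v)$ (and their Hessians), with coefficients that are explicit rational matrix functions of $(x,y)$.

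Working in the coordinates $(u,v)$ and exploiting the freedom of varying one variable while the other is fixed, we aim to show the following.
\begin{itemize}
\item $\nabla g_U(u)=-c_1-p\,u^{-1}+u^{-1}c_2u^{-1}$ for constant symmetric $c_1,c_2$ and scalar $p$, and similarly for $g_V$.
\item The coefficient of $u^{-1}$ must be a multiple of the identity because of the $\mathrm{GL}$-type equivariance. Concretely, a linear map $L$ on $\Omega$ commuting with enough congruences is scalar; this is the classical lemma used in \cite{LetWes2000}.
\end{itemize}
Integrating yields MGIG forms $g_U(u)=(\mu-\tfrac{r+1}{2})\log\det u-\langle c_1,u\rangle-\langle c_2,u^{-1}\rangle+\mathrm{const}$, and analogously for $V$. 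Substituting back into the functional equation determines $g_X$ and $g_Y$ and forces the parameter relations. Matching the terms $\langle\cdot,x\rangle$, $\langle\cdot,x^{-1}\rangle$, $\langle \cdot,y\rangle$, $\langle\cdot,y^{-1}\rangle$ after expressing $\langle c,u\rangle+\langle d,v\rangle$ through $x,y$ gives $c_1=\alpha b$, $c_2=a$, and so on, i.e.\ \eqref{XY} and \eqref{UV}. Integrability of the densities forces $a,b\in\Omega_+$. Here one uses the identity behind Theorem~\ref{direc}, namely that $\langle \alpha b,u\rangle+\langle a,u^{-1}\rangle+\langle b,v\rangle+\langle\beta a,v^{-1}\rangle$ equals the corresponding $x,y$ expression, read in reverse. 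The assumption $\alpha\neq\beta$ is what makes the coefficient matrices nondegenerate, so the separation in Step 2 works.

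\textbf{Main obstacle.} The hard step is the separation in Step 2. In the noncommutative setting the mixed derivative produces terms of the form $A\,\nabla g(u)\,B+B\,\nabla g(u)\,A$ with $A,B$ depending on both variables. Isolating $\nabla g_U$ requires a careful choice of directions, e.g.\ differentiating along $h$ and then specializing to $x,y$ commuting, or to $y$ a multiple of $x$. We would first restrict to the commuting submanifold to get scalar-type information (reducing to the known univariate result of \cite{BaoNoa2021}/\cite{LetWes2024} on eigenvalues), and then use full matrix perturbations to rule out non-isotropic terms, following the template of the Wishart/MGIG characterization in \cite{LetWes2000}.
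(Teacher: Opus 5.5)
\textbf{There is a genuine gap, in Step~2.}

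Your Step~1 and your final matching step are fine in principle. In fact $\psi$ is an involution, $\psi=\theta\circ\phi^{(\alpha,\beta)}\circ\theta$ with $\theta(x,y)=(x,y^{-1})$, and its Jacobian $(\det v/\det y)^{r+1}$ can be absorbed into $g_Y$ and $g_V$. Step~2, however, is the entire analytic content of the theorem, and you only state it as a target.

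A mixed second derivative does kill $g_X$ and $g_Y$. But it leaves a single matrix identity in which $g_U',g_U''$ at $u$ and $g_V',g_V''$ at $v$ appear together, sandwiched between non-commuting factors depending on $(x,y)$. Your plan gives no mechanism for decoupling it, and the two tools you name do not supply one:
\begin{itemize}
\item The univariate characterizations of \cite{BaoNoa2021,LetWes2024} concern independent real random variables. The matrix densities restricted to commuting pairs do not factor over eigenvalues, so these results cannot be applied ``on eigenvalues''.
\item The congruence equivariance $\psi(gxg^T,gyg^T)=(g^{-T}ug^{-1},g^{-T}vg^{-1})$ only shows that the set of solutions is congruence invariant. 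Since the densities are not assumed invariant, it constrains no single solution. The scalarity of the $\log\det$ coefficient therefore has to be extracted from the functional equation itself.
\end{itemize}

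\textbf{The missing idea is the decoupling used in the paper.}

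First restrict to rays $(x,y)=(\sigma w,\tau w)$. Then $u,v$ are scalar multiples of $w^{-1}$, and the matrix equation becomes the scalar equation \eqref{HYPSTR} for $\rho\mapsto g_W(\rho w^{\pm1})$. Multiply by auxiliary univariate GIG densities and normalize; integrability holds for a.e.\ $w$ by \eqref{finite}. The univariate characterization then applies (or, under your $C^2$ hypothesis, you can solve the scalar equation directly). It gives a GIG form along each ray, with unknown coefficients $\mathfrak a(w),\mathfrak b(w),\mathfrak l(w)$ homogeneous of degrees $1,-1,0$, and $\mathfrak C_W(w)$ shifting by $\pm\mathfrak l(w)\log\rho$ under $w\mapsto\rho w$.

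Next evaluate the full matrix equation at $(\rho x,\rho y)$. By homogeneity each side is a combination of $1,\rho,\rho^{-1},\log\rho$. Linear independence of these splits the equation into the four separate equations \eqref{as}--\eqref{lambda}, one per coefficient function.

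Only at this point does your ``differentiate and set $y=\rho x$'' tactic work:
\begin{itemize}
\item For \eqref{as} it yields $C(\rho)\,\mathfrak a''(x)=0$ with $C(\rho)\neq0$ for $\rho^2\neq\alpha\beta$, so $\mathfrak a$ is linear.
\item $\mathfrak b(w^{-1})$ satisfies the same equation, because $\psi$ is an involution.
\item For $\mathfrak l$, combine the second-order identity at $y=x$ with a first-order identity obtained by letting $\rho\to\infty$ under $y\mapsto\rho y$. This gives $\mathfrak l'(x)=\frac12(cx^{-1}+x^{-1}c)$ and $y^{-1}cy+ycy^{-1}=2c$ for all $y$. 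Hence $c$ is scalar, and $0$-homogeneity forces $\mathfrak l$ to be constant.
\item The $\mathfrak C_W$ are handled the same way.
\end{itemize}

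Without this ray reduction plus $\rho$-splitting, or an equally concrete substitute, your Step~2 restates the conclusion rather than proving it.
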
	

\bigskip
\begin{remark}
	Note that $U$ and $V$, as defined in \eqref{UV0}, can be represented as 
		\begin{equation}\label{UV1}
		\begin{cases} U=\tfrac \beta\alpha Y^{-1}+\left(1-\tfrac \beta\alpha\right)(\alpha X+Y)^{-1}
			\\
			V=X^{-1}+(\beta-\alpha)(\alpha X+Y)^{-1},
		\end{cases}
	\end{equation}
	which actually is of the same form as  in the univariate case given in the  second line of \eqref{GIG1} . 
	
		Let us point out the following: If $X$ and $Y$ are symmetric positive definite matrices (spd) then $U$ and $V$  are also spd. Symmetry is clear from the  presentation of $U$ and $V$ in \eqref{UV1}. Positive definitness is less obvious. Several proofs are available. For instance one can refer to a theorem by Wigner, \cite{Wig1963}, which says that if $a,b,c$ are spd matrices of the same dimension and the product $abc$ is symmetric, then $abc$ is spd as well. Applying Wigner's result to the presentation of $U$ and $V$  in \eqref{UV0} shows that they are both  $\Omega_+$-valued random matrices.
\end{remark}
		
\bigskip
\begin{proof}[Proof of Theorem \ref{charac}.] It is divided into ten parts:
	
	\subsection{A functional equation on $\Omega_+$ and its counterpart on $(0,\infty)$.}
	
	$\,$
	
	\bigskip
	Note that in view of \eqref{UV1} we have $(U,V)=\psi(X,Y)$, where $\psi:\Omega_+^2\to\Omega_+^2$ is defined by
	$$
	\psi(x,y)=\left((\alpha x+y)^{-1}(\beta x+y)y^{-1},\,(\alpha x+y)^{-1}(\beta x+y)x^{-1}\right).
	$$
	For computing the jacobian of $\psi$ we note that $\psi=\theta\circ\phi^{(\alpha,\beta)}\circ\theta$, where $\Omega_+\ni(x,y)\stackrel{\theta}{\mapsto}(x,y^{-1})$. Recall, see Lemma 5.1, \cite{LetWes2024}, that the function  $\phi^{(\alpha,\beta)}=:\phi=(\phi_1,\phi_2)$ is an involution with jacobian $J_{\phi}=1$.  The jacobian of $\theta(x,y)$ is $(\det\,y)^{-(r+1)}$. Indeed, it follows e.g. from Lemma 2.2 in \cite{LetWes2000}, in view of the fact that the differential $D_y\,y^{-1}[h]=-y^{-1}hy^{-1}$, $h\in\Omega$. Since $\theta$ is an involution, we conclude that $\psi $ is an involution as well. Thus the jacobian of $\psi$ assumes the form
	$$
	J_\psi(x,y)=J_{\theta}(\phi(x,y^{-1}))\,J_\phi(x,y^{-1})\,J_{\theta}(x,y)=(\det(\phi_2(x,y^{-1})))^{-(r+1)}\,(\det\,y)^{-(r+1)}=\left(\tfrac{\det\,v}{\det\,y}\right)^{r+1}.
	$$
	
	\bigskip
	For $W\in\{X,Y,U,V\}$ we denote by $\mathfrak f_W$ the density of the law of $W$. Then let $\mathfrak g_W(x)=\log\,\mathfrak f_W(x)$ for $W\in\{X,U\}$ and $\mathfrak g_W(x)=\log\,\mathfrak f_W(x)+(r+1)\log \det x$ for $W\in\{Y,V\}$. Consequently, the independence assumption implies
	\begin{equation}\label{gs}
	\mathfrak g_U(u)+\mathfrak g_V(v)=\mathfrak g_X(x)+\mathfrak g_Y(y),\quad x,y\in\Omega_+,
	\end{equation}
	where $(u,v)=(u(x,y),v(x,y))$.
	
	\bigskip
	We choose an arbitrary  $w\in\Omega_+$ and apply \eqref{gs} to $(x,y)=(\sigma w,\,\tau w)$, where $\sigma,\tau\in(0,\infty)$. It gives 
	\begin{equation}\label{HYPST}
		\mathfrak g_U\left((\tau w+\alpha \sigma w)^{-1}(\tau w+\beta \sigma w)(\tau w)^{-1}\right)+\mathfrak g_V\left((\tau w+\alpha \sigma w)^{-1}(\tau w+\beta \sigma w)(\sigma w)^{-1}\right)=\mathfrak g_X(\sigma w)+\mathfrak g_Y(\tau w).
	\end{equation}
	
	\bigskip
	In the sequel we will consider several quadruples of real functions $(\mathfrak m_X,\,\mathfrak m_Y, \mathfrak m_U, \mathfrak m_V)=(\mathfrak m_W)_{W\in\{X,Y,U,V\}}$ defined on $(0,\infty)$, satisfying 
	\begin{equation}\label{HYPSTR}
		\mathfrak m_U\left(\tfrac{\tau+\beta \sigma}{\tau+\alpha \sigma}\tfrac{1}{\tau}\right)+\mathfrak m_V\left(\tfrac{\tau+\beta \sigma}{\tau+\alpha \sigma}\tfrac{1}{\sigma}\right)=\mathfrak m_X(\sigma)+\mathfrak m_Y(\tau),\quad \sigma,\tau>0,
	\end{equation}
	
	For every $w\in\Omega_+$ we define the quadruple of real functions 
	$$\begin{cases}\mathfrak h_U^{(w)}(\rho):= &
		\mathfrak g_U(\rho w^{-1}), \quad \rho>0, \\
		\mathfrak h_V^{(w)}(\rho):=&\mathfrak g_V(\rho w^{-1}), \quad \rho>0,\\
		\mathfrak h_X^{(w)}(\rho):=&\mathfrak g_X(\rho w), \quad \rho>0,\\
		\mathfrak h_Y^{(w)}(\rho):=&\mathfrak g_Y(\rho w),\quad  \rho>0.
	\end{cases}
$$
	 In view of  \eqref{HYPST} for all $w\in\Omega_+$ the  quadruple $(\mathfrak h^{(w)}_W)_{W\in\{X,Y,U,V\}}$  satisfies \eqref{HYPSTR} .

\bigskip
\subsection{Solution of equation \eqref{HYPSTR} for the quadruple $(\mathfrak h^{(w)}_W)_{W\in\{X,Y,U,V\}}$.} $\,$

 \bigskip
We attack the problem by reducing equation \eqref{HYPSTR} in this case to equation \eqref{eff} for suitably chosen univariate densities.

\bigskip
Note that for any $W\in\{X,Y,U,V\}$ and any $p\in\mathbb R$, $q,s>0$ we have
\begin{equation}\label{finite}
\infty>\int_0^\infty\,\left(\int_{\Omega_+}\,\mathfrak f_W(w)\,\mathrm dw\right)\,\xi^{p-1}e^{-q\xi-s/\xi}\,\mathrm d\xi=\int_{\Omega_+}\,\left(\int_0^\infty\,\xi^{p+r-1}\,e^{-q\xi-s/\xi}\mathfrak f_W(\xi v)\,\mathrm d\xi\right)\,\mathrm dv.
\end{equation}
Therefore, the function $(0,\infty)\ni\xi\mapsto \xi^{p+r-1}e^{-q\xi-s/\xi}\mathfrak f_W(\xi v)\in(0,\infty)$ is integrable for almost all $v\in\Omega_+$  with respect to the Lebesgue measure.

\bigskip
Consider independent $X'$ and $Y'$ univariate $\mathrm{GIG}(-\lambda',\alpha\gamma_1,\gamma_2)$ and  $\mathrm{GIG}(\lambda',\gamma_1,\beta\gamma_2)$. Then $(U',V')$ defined in \eqref{GIG1} are independent $\mathrm{GIG}(-\lambda',\alpha\gamma_2,\gamma_1)$ and $\mathrm{GIG}(\lambda',\gamma_2,\beta\gamma_1)$. Let $\tilde {\mathfrak h}_{W}=\log\,\mathfrak f_{W}$ for $W\in\{X',U'\}$ and $\tilde{\mathfrak h}_{W}(\tau)=\log\,\mathfrak f_{W}(\tau)+2\log\,\tau$ for $W\in\{Y',V'\}$, where $\mathfrak f_{W}$ is the density of $W\in\{X',Y',U',V'\}$. Consequently, the quadruple $(\tilde{\mathfrak h}_{W})_{W\in\{X',Y',U',V'\}}$ satisfies \eqref{HYPSTR}. 

\bigskip
 Set 
 $$
 \mathfrak H_X^{(w)}:=\tilde{\mathfrak h}_{X'}+\mathfrak h_X^{(w)},\quad \mathfrak H_Y^{(w)}:=\tilde{\mathfrak h}_{Y'}+\mathfrak h_Y^{(w)},\quad\mathfrak H_U^{(w)}:=\tilde{\mathfrak h}_{U'}+\mathfrak h_U^{(w)},\quad\mathfrak H_V^{(w)}:=\tilde{\mathfrak h}_{V'}+\mathfrak h_V^{(w)},\quad w\in\Omega_+,
 $$ 
 and define  functions $\mathfrak q_W^{(w)}$, $W\in\{X,Y,U,V\}$, $w\in\Omega_+$, as follows
$$
\mathfrak q_X^{(w)}(\rho):=\exp(\mathfrak H_X^{(w)}(\rho))=\mathfrak f_{X'}(\rho)\,\mathfrak f_X(\rho w),\quad
\mathfrak q_U^{(w)}(\rho):=\exp(\mathfrak H_U^{(w)}(\rho))=\mathfrak f_{U'}(\rho)\,\mathfrak f_U(\rho w^{-1}),
$$
and 
\begin{align*}
\mathfrak q_Y^{(w)}(\rho)&:=\exp(\mathfrak H_Y^{(w)}(\rho))\rho^{-2}=\mathfrak f_{Y'}(\rho)\,\mathfrak f_Y(\rho w)\,\rho^{r(r+1)}(\det\,w)^{r+1},\\
\mathfrak q_V^{(w)}(\rho)&:=\exp(\mathfrak H_V^{(w)}(\rho))\rho^{-2}=\mathfrak f_{V'}(\rho)\,\mathfrak f_V(\rho w^{-1})\,\rho^{r(r+1)}(\det\,w)^{-r-1},
\end{align*}
$\rho>0$. Since for $w\in\Omega_+$ the quadruple $(\mathfrak H_W^{(w)})_{W\in\{X,Y,U,V\}}$ clearly satisfies \eqref{HYPSTR} we see that $\mathfrak q_X^{(w)}$, $\mathfrak q_Y^{(w)}$, $\mathfrak q_U^{(w)}$ and $\mathfrak q_V^{(w)}$,   satisfy \eqref{eff} with $(\mathfrak d_W)_{W\in\{X,Y,U,V\}}=(\mathfrak q_W^{(w)})_{W\in\{X,Y,U,V\}}$, namely 

\begin{equation}\label{efs}
	\mathfrak q_{U}^{(w)}\left(\tfrac{\tau+\beta \sigma}{\tau+\alpha \sigma}\tfrac{1}{\tau}\right)\,\mathfrak q_{V}^{(w)}\left(\tfrac{\tau+\beta \sigma}{\tau+\alpha \sigma}\tfrac{1}{\sigma}\right)\,\left(\tfrac{\tau+\beta\sigma}{\tau\sigma(\tau+\alpha\sigma)}\right)^2=\,\mathfrak q_{X}^{(w)}(\sigma)\,\mathfrak q_{Y}^{(w)}(\tau),\quad \sigma,\tau\in(0,\infty),
\end{equation}   

\bigskip\noindent
In view of \eqref{finite}, (nonnegative) functions $(\mathfrak q_W^{(w)})_{W\in\{X,Y,U,V\}}$,  are integrable over $(0,\infty)$ for almost all  $w\in\Omega_+$ (with respect to the Lebesgue measure on $\Omega_+$). For such $w\in\Omega_+$ set $\mathfrak c_W^{(w)}:=\int_0^\infty\,\mathfrak q_W^{(w)}(\xi)\,\mathrm d\xi$, $W\in\{X,Y,U,V\}$. Integrating \eqref{efs} with respect to $(\tau,\sigma)\in(0,\infty)^2$ we get for almost all $w\in\Omega_+$
\begin{equation}\label{cis}
\mathfrak c_X^{(w)}\,\mathfrak c_Y^{(w)}=\mathfrak c_U^{(w)}\,\mathfrak c_V^{(w)}.
\end{equation}

\bigskip
 Let $W''$ be a univariate random variable with density $\mathfrak f_{W''}^{(w)}:=\mathfrak q_W^{(w)}/\mathfrak c_W^{(w)}$, $W\in\{X,Y,U,V\}$ for almost all $w\in\Omega_+$.  Then \eqref{efs} and \eqref{cis} yield  \eqref{eff} for $(\mathfrak d_W)_{W\in \{X,Y,U,V\}}=(\mathfrak f^{(w)}_{W''})_{W\in \{X,Y,U,V\}}$. 
 
\bigskip
  Thus, if $X''$ and $Y''$ are independent and  $(U'',V'')$ are defined by \eqref{GIG1} with $(X,Y)$ changed into $(X'',Y'')$, then equality \eqref{eff} for $(\mathfrak d_W)_{W\in \{X,Y,U,V\}}=(\mathfrak f^{(w)}_{W''})_{W\in \{X,Y,U,V\}}$ implies that $U''$ and $V''$ are also independent.  In view of the univariate characterization of GIG from \cite{LetWes2024} we conclude that $X''$, $Y''$, $U''$ and $V''$ have respective GIG distributions: in particular, there exist seven functions  $\mathfrak l'':\Omega_+ \to \mathbb R$, $\mathfrak a'', \mathfrak b'':\Omega_+\to (0,\infty)$ and $\mathfrak C_W'':\Omega_+\to\mathbb R$, $W\in\{X,Y,U,V\}$, such that for $\tau>0$ and   $w\in\Omega_+$ (almost all with respect to the Lebesgue measure)
	\begin{eqnarray*}
	\mathfrak H_U^{(w)}(\tau)&=&-\mathfrak l''(w) \log \tau-\alpha \mathfrak b''(w)\tau-\tfrac{\mathfrak a''(w)}{\tau}+ \mathfrak C_U''(w),\\
	\mathfrak H_V^{(w)}(\tau)&=&\mathfrak l''(w)  \log \tau- \mathfrak b''(w)\tau-\beta\tfrac{\mathfrak a''(w)}{\tau}+ \mathfrak C_V''(w),\\
	\mathfrak H_X^{(w)}(\tau)&=&-\mathfrak l''(w)  \log \tau-\alpha\mathfrak  a''(w)\tau-\tfrac{\mathfrak b''(w)}{\tau}+ \mathfrak C_X''(w),\\
	\mathfrak H_Y^{(w)}(\tau)&=&\mathfrak l''(w) \log \tau- \mathfrak a''(w)\tau-\beta \tfrac{\mathfrak b''(w)}{\tau}+ \mathfrak C_Y''(w).
\end{eqnarray*}

\bigskip
	Since $\mathfrak h_W^{(w)}=\mathfrak H_W^{(w)}-\tilde{\mathfrak h}_{W'}$ for $W\in\{X,Y,U,V\}$ we conclude that there exist seven functions, namely  $\mathfrak l=:\Omega_+ \to \mathbb R$, $\mathfrak a, \mathfrak b:\Omega_+\to (0,\infty)$ and $\mathfrak C_W:\Omega_+\to\mathbb R$, $W\in\{X,Y,U,V\}$, such that for $\tau>0$ and   $w\in\Omega_+$ (almost all with respect to the Lebesgue measure on $\Omega_+$) 
	\begin{eqnarray}
		\mathfrak h_U^{(w)}(\tau)&=&-\mathfrak l(w) \log \tau-\alpha \mathfrak b(w)\tau-\tfrac{\mathfrak a(w)}{\tau}+ \mathfrak C_U(w),\label{hu}\\
		\mathfrak h_V^{(w)}(\tau)&=&\mathfrak l(w)  \log \tau- \mathfrak b(w)\tau-\beta\tfrac{\mathfrak a(w)}{\tau}+ \mathfrak C_V(w)\label{hv}\\
		\mathfrak h_X^{(w)}(\tau)&=&-\mathfrak l(w)  \log \tau-\alpha\mathfrak  a(w)\tau-\tfrac{\mathfrak b(w)}{\tau}+ \mathfrak C_X(w)\label{hx}\\
		\mathfrak h_Y^{(w)}(\tau)&=&\mathfrak l(w) \log \tau- \mathfrak a(w)\tau-\beta \tfrac{\mathfrak b(w)}{\tau}+ \mathfrak C_Y(w).\label{hy}
	\end{eqnarray}
Since $\mathfrak h_X^{(w)}(\tau)$, $\mathfrak h_Y^{(w)}(\tau)$, $\mathfrak h_U^{(w)}(\tau)$ and $\mathfrak h_V^{(w)}(\tau)$ for $\tau>0$ are continuous functions of $w\in\Omega_+$ it follows that the equalities \eqref{hu} - \eqref{hy} are satisfied for all $w\in\Omega_+$

\bigskip
\subsection{Scaling arguments of $\mathfrak a$, $\mathfrak b$, $\mathfrak l$ and $\mathfrak C_W$, $W\in\{X,Y,U,V\}$.}$\,$

\bigskip
Here and in subsection 2.4 we follow the idea from \cite{GerMisWes2013},  adapted in a clever way to the matrix version of the Matsumoto-Yor property in \cite{Kol2017}. The idea  lies in changing original $w$ to $\rho w$ and then, essentially, of using linear independence of the functions of scalar argument, which are involved in the equation, see e.g.  the derivations of formuals (10)-(13) in  \cite{Kol2017}.

\bigskip
For $W\in \{U,V\}$, $w\in\Omega_+$ and $\rho>0$ we have 
$$
\mathfrak h_W^{(\rho w)}(\rho\tau)=\mathfrak g_W(\tau w^{-1})=\mathfrak h_W^{(w)}(\tau), \quad\tau>0.
$$
Combining these identities with \eqref{hu} and \eqref{hv}, for $\rho,\tau>0$ we get 
	\begin{equation*}
		-\mathfrak l(w) \log \tau-\alpha \mathfrak b(w)\tau-\tfrac{\mathfrak a(w)}{\tau}+ \mathfrak C_U(w)=-\mathfrak l(\rho w) \log \rho \tau-\alpha \mathfrak b(\rho w)\rho\tau-\tfrac{\mathfrak a(\rho w)}{\rho\tau}+ \mathfrak C_U(\rho w)\nonumber
	\end{equation*}
	and
	\begin{equation*}
		\mathfrak l(w) \log \tau-\mathfrak b(w)\tau-\beta\tfrac{\mathfrak a(w)}{\tau}+ \mathfrak C_V(w)=\mathfrak l(\rho w) \log \rho \tau- \mathfrak b(\rho w)\rho \tau-\beta\tfrac{\mathfrak a(\rho w)}{\rho\tau}+ \mathfrak C_V(\rho w).
	\end{equation*}

\bigskip
	Similarly, for $W\in\{X,Y\}$, $w\in\Omega_+$ and $\rho>0$ we have  
	$$
	\mathfrak h_W^{(\rho w)}(\tau/\rho)=\mathfrak g_W( \tau w)=\mathfrak h_W^{(w)}(\tau), \quad\tau>0.
	$$
	Combining these identities with \eqref{hx} and \eqref{hy}, for $\rho,\tau>0$ we get
	\begin{equation*}
		-\mathfrak l(w) \log \tau-\alpha \mathfrak a(w)\tau-\tfrac{\mathfrak b(w)}{\tau}+ \mathfrak C_X(w)=-\mathfrak l(\rho w) \log \tfrac\tau\rho-\alpha \tfrac{\mathfrak a(\rho w)}{\rho}\,\tau-\tfrac{\rho \mathfrak b(\rho w)}{\tau}+ \mathfrak C_X(\rho w)
	\end{equation*} 
	and
	\begin{equation*}
		\mathfrak l(w) \log \tau- \mathfrak a(w)\tau-\beta \tfrac{\mathfrak b(w)}{\tau}+ \mathfrak C_Y(w)=l\mathfrak (\rho w) \log \tfrac\tau\rho- \frac{\mathfrak a(\rho w)}{\rho}\,\tau-\beta \tfrac{\rho \mathfrak b(\rho w)}{\tau}+ \mathfrak C_Y(\rho w).
	\end{equation*} 

\bigskip
Since $\tau\mapsto 1,\tau,\tau^{-1},\log \tau$ are linearly independent the above equalities imply that for all $w\in\Omega_+$ and all $\rho>0$ 
	\begin{eqnarray}\label{LIN1}
		\mathfrak a(\rho w)&=&\rho\,\mathfrak a(w)\\\label{LIN2}
		\mathfrak b(\rho w)&=&\tfrac1\rho\,\mathfrak b(w)\\\label{LIN3}
		\mathfrak l(\rho w)&=&\mathfrak l(w)\\\label{LIN4}
		\mathfrak C_U(\rho w)&=&\mathfrak C_U(w)+\mathfrak l(w)\log \rho\\\label{LIN5}
		\mathfrak C_V(\rho w)&=&\mathfrak C_V(w)-\mathfrak l(w)\log \rho\\\label{LIN6}
		\mathfrak C_X(\rho w)&=&\mathfrak C_X(w)-\mathfrak l(w)\log \rho\\\label{LIN7}
		\mathfrak C_Y(\rho w)&=&\mathfrak C_Y(w)+\mathfrak l(w)\log \rho\end{eqnarray}

\bigskip
\subsection{Separate equations for $\mathfrak a$, $\mathfrak b$, $\mathfrak l$ and $\mathfrak C_W$, $W\in\{X,Y,U,V\}$.}$\,$

\bigskip
	Taking  $\tau=1$  in \eqref{hu} - \eqref{hy} and referring to relations between $\mathfrak h_W$ and $\mathfrak g_W$ we get  
	
	\begin{eqnarray}
		\mathfrak g_U(x)&=&-\alpha  \mathfrak b(x^{-1})-\mathfrak a(x^{-1})+\mathfrak C_U(x^{-1})\label{gu}\\
		\mathfrak g_V(x)&=&- \mathfrak b(x^{-1})-\beta \mathfrak a(x^{-1})+\mathfrak C_V(x^{-1})\label{gv}\\
		\mathfrak g_X(x)&=&-\alpha  \mathfrak a(x)-\mathfrak b(x)+\mathfrak C_X(x)\label{gx}\\
		\mathfrak g_Y(x)&=&-  \mathfrak a(x)-\beta \mathfrak b(x)+\mathfrak C_Y(x).\label{gy}
	\end{eqnarray}  

\bigskip
We insert the above form of $\mathfrak g_U$, $\mathfrak g_V$, $\mathfrak g_X$ and $\mathfrak g_Y$ in \eqref{gs} replacing $x$ and $y$ by $\rho x$ and $\rho y$, which implies that $u=u(x,y)$ and $v=v(x,y)$ will be replaced by $u/\rho$ and $v/\rho$. Whence

\begin{eqnarray}&&\nonumber
	-\alpha \mathfrak b(\rho u^{-1})-\mathfrak a(\rho u^{-1})+\mathfrak C_U(\rho u^{-1}) - \mathfrak b(\rho v^{-1})-\beta \mathfrak a(\rho v^{-1})+\mathfrak C_V(\rho v^{-1})\\&=&
	-\alpha \mathfrak a(\rho x))-\mathfrak b(\rho x)+\mathfrak C_X(\rho x)\label{HYPINDEP3}
	-\mathfrak a(\rho y) - \beta \mathfrak b(\rho y)+\mathfrak C_Y(\rho y)\end{eqnarray} 
which in view of \eqref{LIN1}-\eqref{LIN7} can be rewritten as

\begin{align*}
	&- \rho\,\mathfrak a(u^{-1})-\tfrac{\alpha}\rho\, \mathfrak b(u^{-1})+\mathfrak C_U(u^{-1})+\mathfrak l(u^{-1})\log \rho
	- \beta\,\rho\,\mathfrak a(v^{-1})-\tfrac1\rho\, \mathfrak b(v^{-1})+\mathfrak C_V(v^{-1})-\mathfrak l(v^{-1})\log \rho\\
=&	- \tfrac1\rho\,\mathfrak b(x)-\alpha\,\rho\, \mathfrak a(x)+\mathfrak C_X(x)-\mathfrak l(x)\log \rho -\beta \tfrac1\rho\,\mathfrak b(y) -   \rho\,\mathfrak a(y)+\mathfrak C_Y(y)+\mathfrak l(y)\log \rho
\end{align*} 

\bigskip
Here again, we use  linear independence of the four functions $\rho\mapsto 1,\rho, 1/\rho,\log \rho$ to conclude that for all $x,y\in\Omega_+$ and $(u,v)=(u(x,y),\,v(x,y))$ 
\begin{eqnarray}
	\mathfrak a(u^{-1})+\beta \mathfrak a(v^{-1})&=&\alpha \mathfrak a(x)+ \mathfrak a(y),\label{as}\\
	\alpha \mathfrak b(u^{-1})+\mathfrak b(v^{-1})&=& \mathfrak b(x)+\beta \mathfrak b(y),\label{bs}\\
	\mathfrak C_U(u^{-1})+\mathfrak C_V(v^{-1})&=&\mathfrak C_X(x)+\mathfrak C_Y(y),\label{cs}\\
	\mathfrak l(u^{-1})-\mathfrak l(v^{-1})&=&\mathfrak l(y)-\mathfrak l(x).\label{lambda}
\end{eqnarray}

\bigskip
\subsection{Some observations about  derivatives $D_x$ and $D_y$.} $\,$

\bigskip
The goal is to identify functions $\mathfrak a$, $\mathfrak b$, $\mathfrak l$ and $\mathfrak C_W$, $W\in\{X,Y,U,V\}$, on the basis of equations \eqref{as}-\eqref{lambda}. To this end we will need differentials of $u^{-1}$ and $v^{-1}$ with respect to $x$ and $y$. To ease the notation below we write $z:=(\beta x+y)^{-1}$ remembering that it depends on $x$ and $y$.

\bigskip
In view of 
\begin{equation}\label{r1}
u^{-1}=y(\beta x+y)^{-1}(\alpha x+y)=yz\left(\tfrac\alpha\beta z^{-1}-\tfrac{\alpha-\beta}\beta y\right)=\tfrac\alpha\beta y-\tfrac{\alpha-\beta}{\beta}\,yzy
\end{equation} 
we have
\begin{equation}\label{dux}
	D_x\left(u^{-1}\right)(h)=(\alpha-\beta)yzhzy,\quad h\in\Omega.
\end{equation}
In view  of
\begin{equation}\label{r2}
v^{-1}=x(\beta x+y)^{-1}(\alpha x+y)=xz\left(z^{-1}+(\alpha-\beta)x\right)= x+(\alpha-\beta)\,xzx.
\end{equation}
we have
\begin{equation}\label{dvy}
	D_y\left(v^{-1}\right)(h)=-(\alpha-\beta)xzhzx,\quad h\in\Omega.
\end{equation}

\bigskip
Since
$$
\beta v^{-1}+u^{-1}=\alpha x+y,
$$
 \eqref{dux} and \eqref{dvy} imply
\begin{equation}\label{duy}
	D_y\left(u^{-1}\right)(h)=h+\beta(\alpha-\beta)xzhzx,\quad h\in\Omega
\end{equation}
and
\begin{equation}\label{dvx}
	D_x\left(v^{-1}\right)(h)=\tfrac\alpha\beta h-\tfrac{\alpha-\beta}{\beta}\,yzhzy,\quad h\in\Omega.
\end{equation}

\bigskip
Similarly, $zy=I-\beta z x$ leads to
\begin{equation}\label{Dy}
	D_y\left(zy\right)(h)=\beta zhzx\quad\mbox{and}\quad 
	D_y\left(yz\right)(h)=\beta xzhz.
\end{equation} 

\bigskip
\subsection{Identification of $\mathfrak a$.} $\,$

\bigskip
Applying $D_x$ to both sides of \eqref{as} and referring to \eqref{dux} and \eqref{dvx} we get
$$
(\alpha-\beta)zy\mathfrak a'\left(u^{-1}\right)yz+\alpha \mathfrak a'\left(v^{-1}\right)-(\alpha-\beta)zy\mathfrak a'\left(v^{-1}\right)yz=\alpha \mathfrak a'(x)
$$
which holds for all $x,y\in\Omega_+$. 

\bigskip
Applying $D_y$ to the above equation, referring to \eqref{duy}, \eqref{dvy} and \eqref{Dy}, we obtain
\begin{align*}
&\beta zhzx\mathfrak a'\left(u^{-1}\right)yz+\beta zy\mathfrak a'\left(u^{-1}\right)xzhz
+zy\left(\mathfrak a''\left(u^{-1}\right)\left[h+\beta(\alpha-\beta)xzhzx\right]\right)yz\\
-&\beta zhzx\mathfrak a'\left(v^{-1}\right)yz-\beta zy\mathfrak a'\left(v^{-1}\right)xzhz
+(\alpha-\beta)zy\left(\mathfrak a''\left(v^{-1}\right)\left[xzhzx\right]\right)yz-\alpha\mathfrak a''\left(v^{-1}\right)\left[xzhzx\right]=0,
\end{align*}
which can be rewritten as
\begin{align}
&(\alpha-\beta)zy\left(\beta \mathfrak a''\left(u^{-1}\right)\left[xzhzx\right]+\mathfrak a''\left(v^{-1}\right)\left[xzhzx\right]\right)yz
+zy\left(\mathfrak a''\left(u^{-1}\right)[h]\right)yz-\alpha\mathfrak a''\left(v^{-1}\right)\left[xzhzx\right]\label{eq:a}\\
=&\beta\left\{zhzx\left(\mathfrak a'\left(v^{-1}\right)-\mathfrak a'\left(u^{-1}\right)\right)yz+zy\left(\mathfrak a'\left(v^{-1}\right)-\mathfrak a'\left(u^{-1}\right)\right)xzhz\right\}.\nonumber
\end{align}

\bigskip
In view of \eqref{LIN1} we have $\mathfrak a'(\rho w)=\mathfrak a'(w)$ and $\mathfrak a''(\rho w)=\tfrac1\rho \mathfrak a''(w)$, $w\in\Omega_+$, $\rho>0$. 
Take $y=\rho x$. Then   
$$
u^{-1}=\rho v^{-1}=\tfrac{\rho(\alpha+\rho)}{\beta+\rho}\,x,
$$
which implies
$$
\mathfrak a'(v^{-1})=\mathfrak a'(u^{-1})\quad\mbox{and}\quad \mathfrak a''\left(u^{-1}\right)=\tfrac1\rho\,\mathfrak a''\left(v^{-1}\right)=\tfrac{\beta+\rho}{\rho(\alpha+\rho)}\,\mathfrak a''(x).
$$ 
Moreover,
$$
zy=yz=\tfrac \rho{\beta+\rho}\,I,\quad  zx=xz=\tfrac 1{\beta+\rho}\,I.
$$

\bigskip
So, for $y=\rho x$   equation \eqref{eq:a}  assumes the form
$$
 C(\rho)\tfrac{\beta+\rho}{\alpha+\rho}\,\mathfrak a''(x)[h]=0
$$
where
$$
C(\rho)=(\alpha-\beta)\tfrac{\rho^2}{(\beta+\rho)^2}\,\left(\tfrac\beta\rho \tfrac1{(\beta+\rho)^2}+\tfrac1{(\beta+\rho)^2}\right)+\tfrac{\rho^2}{(\beta+\rho)^2}\tfrac1\rho-\tfrac\alpha{(\beta+\rho)^2}=\tfrac{\rho^2-\alpha\beta}{(\beta+\rho)^3}.
$$

\bigskip
Taking $\rho^2\neq \alpha\beta$ we obtain $\mathfrak a''(x)=0$. Since $x\in\Omega_+$ is arbitrary, we conclude that there exist a matrix $a\in\Omega$ and number $\kappa$ such that $\mathfrak a(w)=\mathrm{tr}[aw]+\kappa$, $w\in\Omega_+$. In view of \eqref{LIN1} we see that $\kappa=0$.

\bigskip
\subsection{Identification of $\mathfrak b$.}$\,$

\bigskip
Define $\tilde{\mathfrak b}(w)=\mathfrak b(w^{-1})$. Then \eqref{LIN2} implies $\tilde{\mathfrak b}(\rho w)=\rho\tilde{\mathfrak b}(w)$, $w\in\Omega_+$, $\rho>0$. Moreover, \eqref{bs} yields 
$$
\alpha \tilde{\mathfrak b}(u)+\tilde{\mathfrak b}(v)=\tilde{\mathfrak b}(x^{-1})+\beta\tilde{\mathfrak b}(y^{-1}), \quad u,v\in\Omega_+
$$
with $x=x(u,v)=(\alpha u+v)^{-1}(\beta u+v)v^{-1}$ and $y=y(u,v)=(\alpha u+v)^{-1}(\beta u+v)u^{-1}$. Consequently, $\tilde{\mathfrak b}$ has the same property \eqref{LIN1} as $\mathfrak a$ and satisfies the same equation \eqref{as} as $\mathfrak a$. So, from the result on $a$ it follows that $\tilde{ \mathfrak b}(w)=\mathrm{tr}\left[bw\right]$, $w\in\Omega_+$, for some $b\in\Omega$. Consequently, $\mathfrak b(w)=\mathrm{tr}\left[bw^{-1}\right]$, $w\in\Omega_+$.

\bigskip
\subsection{Identification of $\mathfrak l$.}\label{ll}$\,$

\bigskip
Applying $D_x$ to both sides of \eqref{lambda} and referring to \eqref{dux} and \eqref{dvx} we get
\begin{equation}\label{l'}
(\alpha-\beta)zy\left(\mathfrak l'\left(u^{-1}\right)+\tfrac1\beta\mathfrak l'\left(v^{-1}\right)\right)yz=\tfrac\alpha\beta \mathfrak l'\left(v^{-1}\right)- \mathfrak l'(x)
\end{equation}
which holds for all $x,y\in\Omega_+$. 

\bigskip
Then take the $D_y$ of the above evaluating it at $h\in\Omega$. Thus, referring this time to \eqref{duy}, \eqref{dvy} and \eqref{Dy}, similarly as in the case of $\mathfrak a$, we get
\begin{align}
&(\alpha-\beta)\beta zhzx\left(\mathfrak l'\left(u^{-1}\right)+\tfrac1\beta\mathfrak l'\left(v^{-1}\right)\right)yz+(\alpha-\beta)\beta zy\left(\mathfrak l'\left(u^{-1}\right)+\tfrac1\beta\mathfrak l'\left(v^{-1}\right)\right)xzhz\label{el'}\\
&+(\alpha-\beta)zy\left(\mathfrak l''\left(u^{-1}\right)\left[h+\beta(\alpha-\beta)xzhzx\right]-\tfrac1\beta\mathfrak l''\left(v^{-1}\right)\left[(\alpha-\beta)xzhzx\right]\right)yz\nonumber\\
=&-\tfrac\alpha\beta\,\mathfrak l''\left(v^{-1}\right)\left[(\alpha-\beta)xzhzx\right].\nonumber
\end{align}

\bigskip
Note that \eqref{LIN3} implies that $\mathfrak l'(\rho w)=\mathfrak l'(w)/\rho$ and $\mathfrak l''(\rho w)=\mathfrak l'(w)/\rho^2$. Since  $y=x$ implies $u=v$ and $u^{-1}=v^{-1}=\tfrac{\alpha+1}{\beta+1}x$ it follows that 
$$
\mathfrak l'\left(u^{-1}\right)=\mathfrak l'\left(v^{-1}\right)=\tfrac{\beta+1}{\alpha+1}\,\mathfrak l'(x)\qquad\mbox{and}\qquad\mathfrak l''\left(u^{-1}\right)=\mathfrak l''\left(v^{-1}\right)=\left(\tfrac{\beta+1}{\alpha+1}\right)^2\,\mathfrak l''(x).
$$ 
Therefore \eqref{el'} assumes the form 
$$
c_1\left(x^{-1}h\mathfrak l'(x)+\mathfrak l'(x)hx^{-1}\right)+c_2\mathfrak l''(x)[h]=0,
$$
where
$$
c_1=\tfrac{\beta}{(\beta+1)^3}\,\left(1+\tfrac1\beta\right)\,\tfrac{\beta+1}{\alpha+1}=\tfrac1{(\alpha+1)(\beta+1)}\quad\mbox{and}\quad c_2=\left(\tfrac{1+\alpha/\beta}{(\beta+1)^2}+\tfrac{(\alpha-\beta)(\beta-1/\beta)}{(\beta+1)^4}\right)\,\left(\tfrac{\beta+1}{\alpha+1}\right)^2=\tfrac2{(\alpha+1)(\beta+1)}.
$$

\bigskip
Consequently 
\begin{equation}\label{l''}
	-2\mathfrak l''(x)[h]=\mathfrak l'(x)hx^{-1}+x^{-1}h\mathfrak l'(x),\quad x\in\Omega_+,\;h\in\Omega.
\end{equation}	

\bigskip
On the other hand, in view of \eqref{r2} and \eqref{LIN3} with $\rho y$ inserted for $y$, we get
\begin{equation}\label{llll}
\mathfrak l(y+\Delta_1(\rho))-\mathfrak l(y)=\mathfrak l(x+\Delta_2(\rho))-\mathfrak l(x),
\end{equation}
where for $w_1=y$ and $w_2=x$ we have
$$
\Delta_i(\rho):=(\alpha-\beta)w_i(\beta x+\rho y)^{-1}x\stackrel{\rho\to\infty}{\longrightarrow}0\,\quad i=1,2.
$$
So as $\rho\to\infty$
$$
\mathfrak l(w_i+\Delta_i(\rho))-\mathfrak l(w_i)=\mathrm{tr}\left[\mathfrak l'(w_i)\,\Delta_i(\rho)\right]+o\left(\tfrac1\rho\right),\quad i=1,2.
$$
Multiply both sides of \eqref{llll} by $\rho$ and use the above asymptotics for $\rho\to\infty$. Since 
$$
\lim_{\rho\to\infty}\,\Delta_1(\rho)=(\alpha-\beta)x\qquad\mbox{and}\qquad\lim_{\rho\to\infty}\,\Delta_2(\rho)=(\alpha-\beta)xy^{-1}x
$$ 
it follows that
\begin{equation}\label{trr}
	\mathrm{tr}\left[\mathfrak l'(y)x\right]=\mathrm{tr}\left[\mathfrak l'(x)xy^{-1}x\right],\quad x,y\in\Omega_+.
\end{equation}

\bigskip
Now we apply $D_y$ to \eqref{trr} and evaluate it at $h\in\Omega$. Thus we get
$$
\mathrm{tr}\left[\left(\mathfrak l''(y)[h]\right)x\right]=-\mathrm{tr}\left[\mathfrak l'(x)xy^{-1}hy^{-1}x\right],\quad x,y\in\Omega_+.	 
$$
Comparison with \eqref{l''} (with $x$ changed into $y$) yields
$$
\mathrm{tr}\left[(\mathfrak l'(y)hy^{-1}+y^{-1}h\mathfrak l'(y))x\right]=2\mathrm{tr}\left[\mathfrak l'(x)xy^{-1}hy^{-1}x\right],\quad h\in\Omega.
$$

\bigskip
Therefore
$$
y^{-1}x\mathfrak l'(y)+ \mathfrak l'(y)xy^{-1}=2y^{-1}x\mathfrak l'(x)xy^{-1},
$$
whence
\begin{equation}\label{yx}
x\mathfrak l'(y)y+y\mathfrak l'(y)x=2x\mathfrak l'(x)x,\quad x,y\in\Omega_+.
\end{equation}
Inserting $y=I$ in \eqref{yx} and denoting $c=\mathfrak l'(I)$ we get 
\begin{equation}\label{ll'}
\mathfrak l'(x)=\tfrac12\left(c x^{-1}+x^{-1}c\right),\quad x\in\Omega_+.
\end{equation}

\bigskip
To identify $c$ we insert $x=I$ into \eqref{yx} together with applying representation \eqref{ll'}  for  $\mathfrak l'(y)$. After cancelations, we get
$$
y^{-1}cy+ycy^{-1}=2c, \quad y\in\Omega_+,
$$
which is equivalent to $2ycy=cy^2+y^2c$. The latter yields 
\begin{equation}\label{0}
\mathrm{tr}[(cy^2+y^2c-2ycy)c]=0,\quad y\in\Omega_+.
\end{equation}
Since the left hand side of \eqref{0} can be written as 
%\begin{align*}
%&\mathrm{tr}(cy^2c)-\mathrm{tr}(ycyc)+\mathrm{tr}(cy^2c)-\mathrm{tr}(cycy)=\mathrm{tr}((cy-yc)yc)+\mathrm{tr}(cy(yc-cy))
%=\mathrm{tr}((cy-yc)yc)-\mathrm{tr}(cy(cy-yc))\\
%=&\mathrm{tr}((cy-yc)yc)-\mathrm{tr}((cy-yc)cy)=\mathrm{tr}(cy-yc)(yc-cy))=\mathrm{tr}((cy-yc)(cy-yc)^T).
%\end{align*}
$$
\mathrm{tr}[(cy-yc)(yc-cy)]=\mathrm{tr}\left[(cy-yc)(cy-yc)^T\right]
$$
it follows that $cy=yc$, $y\in\Omega_+$, which implies that  $c$ is a multiple of identity.

\bigskip
Consequently, \eqref{ll'} yields $\mathfrak l'(x)=\kappa x^{-1}$, $x\in\Omega_+$ for some $\kappa\in\mathbb R$, whence $\mathfrak l(x)=\tilde\lambda+\kappa \log\det(x)$, for some $\tilde\lambda\in\mathbb R$, $x\in\Omega_+$. In view of \eqref{LIN3} we see that $\kappa=0$. 
 
\bigskip
\subsection{Identification of $\mathfrak C_W$, $W\in\{X,Y,U,V\}$.}$\,$ 

\bigskip
Since $\mathfrak l$ is constant it follows that $\mathfrak C_W$, $W\in\{X,Y,U,V\}$, satisfy \eqref{cs} and \eqref{LIN4}-\eqref{LIN7} with  $\mathfrak l(w)=\tilde\lambda$, $w\in\Omega_+$.  Define functions $\mathfrak F_W$, $W\in\{X,Y,U,V\}$, by 
$$\mathfrak F_W(w)=\begin{cases}\mathfrak C_W(w)-\tilde\lambda\,\log\,\det(w), & W=U,Y,\\
\mathfrak C_W(w)+\tilde\lambda\,\log\,\det(w),& W=V,X,	
\end{cases} \quad  w\in\Omega_+.
$$
Then \eqref{LIN4}-\eqref{LIN7} imply $\mathfrak F_W(\rho w)=\mathfrak F_W(w)$, $w\in\Omega_+$, for $W\in\{X,Y,U,V\}$. Moreover, in view of \eqref{cs} we have
\begin{equation}\label{FFFF}
	\mathfrak F_U\left(u^{-1}\right)+\mathfrak F_V\left(v^{-1}\right)=\mathfrak F_X(x)+\mathfrak F_Y(y),\quad x,y\in\Omega_+.
\end{equation}

\bigskip
Inserting $\rho y$ instead of $y$ in the above and taking $\rho\to 0^+$, in view of continuity, we thus get
$$
\mathfrak F_U(y)+\mathfrak F_V(x)=\mathfrak F_X(x)+\mathfrak F_Y(y),\quad x,y\in \Omega_+.
$$ 
The principle of separation of variables implies $\mathfrak F_U=\mathfrak F_Y+\gamma$  and $\mathfrak F_V=\mathfrak F_X-\gamma$,
 where $\gamma$ is a real constant. 

\bigskip
Differentiating \eqref{FFFF} with respect to $x$, referring again to \eqref{dux} and \eqref{dvx}, we obtain
\begin{equation}\label{F'}
	(\alpha-\beta)\,zy\,\mathfrak F_U'\left(u^{-1}\right)yz+\tfrac\alpha\beta\, \mathfrak F_V'\left(v^{-1}\right)-\tfrac{\alpha-\beta}{\beta}\,zy\,\mathfrak F_V'\left(v^{-1}\right)yz=\mathfrak F_X'(x).
\end{equation}
Since $\mathfrak F'_W(\rho w)=\tfrac1\rho \mathfrak F'_W(w)$, $W\in\{X,Y,U,V\}$, inserting $y=\rho x$ in \eqref{F'} and using the fact that $\mathfrak F_X'=\mathfrak F_V'$ we get
$$
(\alpha-\beta)\tfrac{\rho^2}{(\beta+\rho)^2}\,\tfrac{\beta+\rho}{\rho(\alpha+\rho)}\mathfrak F_U'(x)+\tfrac{\alpha}{\beta}\,\tfrac{\beta+\rho}{\alpha+\rho}\mathfrak F_V'(x)-\tfrac{\alpha-\beta}{\beta}\,\tfrac{\rho^2}{(\beta+\rho)^2}\,\tfrac{\beta+\rho}{\alpha+\rho}\mathfrak F_V'(x)=\mathfrak F_V'(x),
$$
which, after cancellations, implies $\mathfrak F_U'=-\mathfrak F_V'$. 

\bigskip
That is, \eqref{F'} becomes the  equation \eqref{l'} with $\mathfrak l'$ changed into $\mathfrak F_U'$. Consequently, as in Section \ref{ll}, see \eqref{l''}, we get
\begin{equation}\label{F''}
	-2\mathfrak F_U''(x)[h]=\mathfrak F_U'(x)hx^{-1}+x^{-1}h\mathfrak F_U(x),\quad x\in\Omega_+,\;h\in\Omega.
\end{equation}	

\bigskip
Referring to \eqref{FFFF}  we see that
$$
\mathfrak F_U\left(u^{-1}\right)-\mathfrak F_U(y)=-\left(\mathfrak F_V\left(v^{-1}\right)-\mathfrak F_V(x)\right),\quad x,y\in\Omega_+.
$$
Inserting in this equation $\rho y$ instead of $y$ and passing to the limit when $\rho\to\infty$, analogously as in Section \ref{ll}, see \eqref{trr}, we get
$$
	\mathrm{tr}\left[\mathfrak F_U'(y)x\right]=-\mathrm{tr}\left[\mathfrak F_V'(x)xy^{-1}x\right],\quad x,y\in\Omega_+.
$$
Since $\mathfrak F_V'=-\mathfrak F_U'$ we conclude that
\begin{equation}\label{trF}
	\mathrm{tr}\left[\mathfrak F_U'(y)x\right]=\mathrm{tr}\left[\mathfrak F_U'(x)xy^{-1}x\right],\quad x,y\in\Omega_+.
\end{equation}	

\bigskip
 Therefore, following the reasoning we used in Section \ref{ll} - note that both functions $\mathfrak l$ and $\mathfrak F_U$ are invariant under scaling the argument and compare  the pair of equations \eqref{l''} and \eqref{trr} for $\mathfrak l'$ of Section \ref{ll} with  the pair \eqref{F''} and \eqref{trF} - we conclude that that $\mathfrak F_U'\equiv 0$. Consequently, $\mathfrak F_V'\equiv 0$, too. Therefore, $\mathfrak F_U=\gamma_U$ and $\mathfrak F_V=\gamma_V$, where $\gamma_W$, $W\in\{U,V\}$ are real constants, and thus $\mathfrak F_X=\gamma_V+\gamma=:\gamma_X$, $\mathfrak F_Y=\gamma_U-\gamma=:\gamma_Y$.
 
 \bigskip
 Finally, we conclude that 
 $$
 \mathfrak C_W(x)=\tilde\lambda\log\det x+\gamma_W,\quad W\in\{U,Y\} ,\quad \mbox{and}\quad \mathfrak C_W(x)=-\tilde\lambda\log\det x+\gamma_W,\quad W\in\{V,X\} ,\qquad x\in\Omega_+.
 $$

\bigskip
\subsection{Identification of distributions of $U$, $V$, $X$ and $Y$.}$\,$

\bigskip
	Let $\lambda=-\tilde\lambda +\tfrac{r+1}{2}$. Recall that 
	$$\mathfrak f_W(w)=\exp(\mathfrak g_W(w))\quad\mbox{for }\;W\in\{U,X\}\quad\mbox{and }\quad \mathfrak f_W(w)=(\det w)^{-r-1}\,\exp(\mathfrak g_W(w))\quad\mbox{for }\;W\in\{V,Y\}.$$ Referring to \eqref{gu} and $\eqref{gv}$ and the expressions derived for $\mathfrak a$, $\mathfrak b$, $\mathfrak l$ and $\mathfrak C_W$, $W\in\{U,V\} $ we get
	$$
	\mathfrak f_U(x)=e^{\gamma_1}(\det x)^{\lambda-\frac{r+1}{2}}\,e^{-\alpha\mathrm{tr}[bx]-\mathrm{tr}\left[ax^{-1}\right]}\,\mathbf 1_{\Omega_+}(x)\quad\mbox{and}\quad \mathfrak f_V(x)=e^{\gamma_2}(\det x)^{-\lambda-\frac{r+1}2}\,e^{-\mathrm{tr}[bx]-\beta \mathrm{tr}\left[ax^{-1}\right]}\,\mathbf 1_{\Omega_+}(x).
	$$ 
	Since both $\mathfrak  f_U$ and $\mathfrak f_V$ are integrable it means that  $a$ and $b$ are both in $\Omega_+$ irrespectively of the sign of  $\lambda\in\mathbb R$ and $e^{\gamma_1}$, $e^{\gamma_2}$ are the normalizing constants. Consequently, $U$ and $V$ have matrix variate GIG distributions as given in  \eqref{UV}. 
	
	\bigskip
	Similarly, \eqref{gx} and \eqref{gy} give 
	$$
	\mathfrak f_X(x)=e^{\gamma_3}(\det x)^{\lambda-\frac{r+1}2}\,e^{-\alpha\mathrm{tr}[ax]-\mathrm{tr}\left[bx^{-1}\right]}\,\mathbf 1_{\Omega_+}(x)\quad\mbox{and}\quad \mathfrak f_Y(x)=e^{\gamma_4}(\det x)^{-\lambda-\frac{r+1}2}\,e^{-\mathrm{tr}[ax]-\beta \mathrm{tr}\left[bx^{-1}\right]}\,\mathbf 1_{\Omega_+}(x),
	$$
	that is $X$ and $Y$ 
	have matrix variate GIG distributions as given in  \eqref{XY}.
\end{proof}

\bigskip
\section{{\bf BIBLIOGRAPHICAL COMMENTS AND OPEN PROBLEMS}}

\bigskip
In the original Matsumoto-Yor paper, \cite{MatYor2001}, the direct result  \eqref{oMY} was proved for $\gamma_1=\gamma_2$. The case of $\gamma_1\neq \gamma_2$ was identified in \cite{LetWes2000} and  \cite{MatYor2003}.  Interestingly, \cite{MatYor2003} also connected this property, under additional assumption that $\lambda=\tfrac12$, to hitting times of the Brownian motion with drift. This independence property is now known as the Matsumoto-Yor (MY) property, see e.g \cite{Sti2005}. It has been intensively studied in the literature. In particular, a characterization of the $\mathrm{GIG}$ and $\mathrm{Gamma}$ by the MY independence property was obtained in \cite{LetWes2000}. For its "regression" versions, where the independence of $U$ and $V$ are replaced by constancy of some conditional moments of $V$ given $U$, see \cite{ChoWan2004} and references therein.  

\bigskip
A  tree structured MY property for random vectors in $(0,\infty)^n$ (and a related characterization) was given in \cite{MasWes2004}. It appears that such tree governed independence property is, as the original one, hidden in the conditional structure of the exponential Brownian motion, see \cite{MatWesWit2009}.  A tree version of the MY property for random matrices was established in \cite{Bob2015}.

\bigskip
In \cite{MasWes2006} a modification of the MY property for positive definite matrices of different dimensions was established and, rather unexpectedly, connected with the structure of Wishart matrices: conditional independence of the block diagonal element and its Schur complement given the off-diagonal element (see also \cite{But1998}, \cite{SesWes2008}). This result, in particular, allowed to improve the celebrated characterization of the Wishart distribution  of Geiger and Heckermann in \cite{GeiHec2002} studied in the context of statistical Bayesian graphical models.

\bigskip
More recently, free probability versions of MY property and related  characterizations of free GIG (see \cite{Fer2006}) and free Poisson (which, in free probability setting, plays the role of the gamma law) were derived in \cite{Szp2017} and its regression version characterization was given in \cite{Swi2022}. 

\bigskip
It is interesting to note that very recently an ultra-discrete version of  $H_{III,B}^{(\alpha,\beta)}$ was identified in \cite{KonNakSas2025} (as well as for other functions of the $H$ class). It was proved there that this new map, as the original one, preserves independence and has the parametric YB property.

\bigskip
Finally let us discuss some related open problems:

\bigskip
\begin{enumerate}
	\item It would be interesting to know how to extend from the cone of positive definite matrices to any (Euclidean, simple) symmetric cone: the map $\phi^{(\alpha,\beta)}$,  the independence property and the characterization. A symmetric cone version of the GIG distribution has been already identifed in  \cite{Kol2017}. Since, see \eqref{faj},  
	\begin{align*}
	y(I+\alpha xy)^{-1}(I+\beta xy)&=y^{1/2}(I+\beta y^{1/2}xy^{1/2})^{1/2}(I+\alpha y^{1/2}xy^{1/2})^{-1}(I+\beta y^{1/2}xy^{1/2})^{1/2}y^{1/2},\\
	x(I+\beta yx)^{-1}(1+\alpha yx)&=x^{1/2}(I+\alpha x^{1/2}yx^{1/2})^{1/2}(I+\beta x^{1/2}yx^{1/2})^{-1}(I+\alpha x^{1/2}yx^{1/2})^{1/2}x^{1/2}),
	\end{align*}
	a plausible candidate for such an extension would be $\phi^{(\alpha,\beta)}(x,y)=(u^{(\alpha,\beta)}(x,y),\,v^{(\alpha,\beta)}(x,y))$ with
	\begin{equation}\label{uvcone}
		\begin{cases}
	u^{(\alpha,\beta)}(x,y)=&\left(P\left\{y^{1/2}\right\}\circ P\left\{\left(1+\beta P\left\{y^{1/2}\right\}x\right)^{1/2}\right\}\right)\left(1+\alpha P\left\{y^{1/2}\right\}x\right)^{-1},\\
	v^{(\alpha,\beta)}(x,y)=&\left(P\left\{x^{1/2}\right\}\circ P\left\{\left(1+\alpha P\left\{x^{1/2}\right\}y\right)^{1/2}\right\}\right)\left(1+\beta P\left\{x^{1/2}\right\}y\right)^{-1},
	\end{cases}
	\end{equation}
	where $P$ is so called quadratic representation in the symmetric cone, see e.g. \cite{FarKor1994} (in particular, $P(x)y=xyx$ in case of the cone of spd matrices).
	Such an extension was done  in \cite{Kol2017} for the original matrix variate MY property, that is in the case of  $(\alpha,\beta)=(1,0)$. Note that \eqref{uvcone} implies that $u^{(1,0)}(x,y^{-1})=(x+y)^{-1}$ and $v^{(1,0)}(x,y^{-1})^{-1}=x^{-1}-(x+y)^{-1}$. Also it would be interesting to know if the elementary, purely algebraic  proof of the YB property, given in the Appendix,  extends to function $\phi^{(\alpha,\beta)}$ defined on non-commutative algebraic domains.
	
	\bigskip
	\item Characterizations related to MY property in the matrix variate case  typically require extra hypothesis of existence and smoothness of densities, while in the univariate case no such requirements are needed. We know only one result in the literature, \cite{CasLet1996}, which falls out of such setting. The paper gives a matrix variate version of the celebrated Lukacs result, \cite{Luk1955}: independence of $(0,\infty)$ valued random variables $X$, $Y$ and independence of $X+Y$ and $\tfrac{X}{X+Y}$ characterizes gamma distributions.  However the authors required an additional assumption of invariance of distribution of the matrix analogue of the quotient $\tfrac X{X+Y}$.  Removing or weakening of  hypothesis about existence of density or, at least, of its smoothness is desirable. In this context let us mention that the original characterization in the matrix variate case from \cite{LetWes2000}  was first improved in \cite{Wes2002}, where the assumption from \cite{LetWes2000}, that the densities are $C_2$ functions, was weakened to differentiability. In \cite{Kol2017} it was  further weakened to continuity. Nevertheless, even in the case of the original  MY property, the characterization in the matrix variate case requires existence of densities which are strictly positive on $\Omega_+$.
	
	\bigskip
	\item  Quadrirational Yang-Baxter maps were introduced in \cite{AdlBobSur2004}, see also \cite{PSTV2010}. In recent years there has been a considerable  interest in construction of non-abelian quadrirational Yang-Baxter maps, see \cite{KasKouNie2025} and references therein. Since  $\phi^{(\alpha,\beta)}$, as defined in \eqref{faj}, is a YB map, it is  a natural lifting of the YB map $H^{(\alpha,\beta)}_{III,B}$ defined on $(0,\infty)^2$ to $\Omega_+^2$. It would be interesting to know if such liftings are possible in case of the remaining independence preserving quadrirational YB maps $H_I^{(\alpha,\beta)}$, $H_{II}^{(\alpha,\beta)}$ and $H_{III,A}^{(\alpha,\beta)}$. Actually, the matrix version of  $H_{II}^{(1,0)}$ was proposed in \cite{KolPil2020}, where it was proved that it is an independence preserving map for the Kummer and Wishart matrices. Similarly, a matrix version of $H_I^{(1,0)}$, which is an independence preserving map for beta distributions, was proposed in \cite{Kol2016}. 
\end{enumerate}

\bigskip
\bibliographystyle{amsplain}
\bibliography{MGIG222}

\bigskip
\appendix

\section{{\bf Proof of Theorem \ref{YBTH}}}

\begin{proof}
	Take arbitrary $x,y,z\in\Omega_+$ and define
	\begin{equation}\label{oh}\small
	\begin{cases}
	(x_1,y_1,z_1)=F_{23}^{(\beta,\gamma)}(x,y,z)=\left(x,z(I+\beta yz)^{-1}(I+\gamma yz),\,y(I+\gamma zy)^{-1}(I+\beta zy)\right),\\ \\
	(x_2,y_2,z_2)=F_{13}^{(\alpha,\gamma)}(x_1,y_1,z_1)=\left(z_1(I+\alpha x_1z_1)^{-1}(I+\gamma x_1z_1),\,y_1,\,x_1(I+\gamma z_1x_1)^{-1}(I+\alpha z_1x_1)\right),\\ \\
	(x_3,y_3,z_3)=F_{12}^{(\alpha,\beta)}(x_2,y_2,z_2)=\left(y_2(I+\alpha x_2y_2)^{-1}(I+\beta x_2y_2),\,x_2(I+\beta y_2x_2)^{-1}(I+\alpha y_2x_2),z_2\right)\\ \\
	(X_1,Y_1,Z_1)=F_{12}^{(\alpha,\beta)}(x,y,z)=\left(y(I+\alpha xy)^{-1}(I+\beta xy),\,x(I+\beta yx)^{-1}(I+\alpha yx),z\right)\\ \\
	(X_2,Y_2,Z_2)=F_{13}^{(\alpha,\gamma)}(X_1,Y_1,Z_1)=\left(Z_1(I+\alpha X_1Z_1)^{-1}(I+\gamma X_1Z_1),\,Y_1,\,X_1(I+\gamma Z_1X_1)^{-1}(I+\alpha Z_1X_1)\right),\\ \\
	(X_3,Y_3,Z_3)=F_{23}^{(\beta,\gamma)}(X_2,Y_2,Z_2)=\left(X_2,Z_2(I+\beta Y_2Z_2)^{-1}(I+\gamma Y_2Z_2),\,Y_2(I+\gamma Z_2Y_2)^{-1}(I+\beta Z_2Y_2)\right).
	\end{cases}
	\end{equation}
	
	\bigskip
	We need to  prove that 
	$$
	(x_3,y_3,z_3)=(X_3,Y_3,Z_3).
	$$
	
	In computations below we will often use the following elementary identities
	\begin{equation}\label{eleq}
	s(I+ ts)=(I+ st)s\quad \mbox{and}\quad s(I+ ts)^{-1}=(I+ st)^{-1}s,\quad s,t\in\Omega_+.
	\end{equation}
	Note that $I$ in \eqref{eleq} can be replaced by any matrix which commutes with $s$ (see the last equality in \eqref{x3}, where $A^*$ and $B^*$ commute).
	
	\subsection*{Proof of $x_3=X_3$} $\,$
	
	\bigskip
	Referring to \eqref{oh}, we see that $x_3=X_3$ is equivalent to
	$$
	z(I+\alpha X_1z)^{-1}(I+\gamma X_1z)=X_2=y_2(I+\alpha x_2y_2)^{-1}(I+\beta x_2y_2)=y_1(I+\alpha x_2y_1)^{-1}(I+\beta x_2y_1)
	$$
	
	\bigskip
	We first compute $x_2$, see \eqref{oh},
	$$
	x_2=z_1(1+\alpha xz_1)^{-1}(I+\gamma xz_1).
	$$
	We note that
	\begin{align*}
	(I+\alpha xz_1)^{-1}(I+\gamma xz_1)=&(I+\alpha xy(1+\gamma zy)^{-1}(I+\beta zy))^{-1} (I+\gamma xy(1+\gamma zy)^{-1}(I+\beta zy))\\
	=&(I+\gamma zy)[I+\gamma zy+\alpha xy(I+\beta zy)]^{-1}\,[I+\gamma zy+\gamma xy(I+\beta zy)](I+\gamma zy)^{-1}.
	\end{align*}
	Consequently,
	$$
	x_2=y(I+\beta zy)[I+\gamma zy+\alpha xy(1+\beta zy)]^{-1}\,[I+\gamma zy+\gamma xy(I+\beta zy)](I+\gamma zy)^{-1}
	$$
	
	\bigskip
	Since $$y_1=z(I+\gamma yz)(I+\beta yz)^{-1}\stackrel{\eqref{eleq}}{=}(I+\gamma zy)(I+\beta zy)^{-1}z$$
	we see that
	\begin{align*}
	x_2y_1=&y(I+\beta zy)[I+\gamma zy+\alpha xy(I+\beta zy)]^{-1}\,[I+\gamma zy+\gamma xy(I+\beta zy)](I+\beta zy)^{-1}z\\
	=&y[I+\gamma zy+\alpha(I+\beta zy) xy]^{-1}\,[I+\gamma zy+\gamma(I+\beta zy)xy]z\\
	=&y(C+\alpha Bxy)^{-1}(C+\gamma Bxy)z\\
	\stackrel{\eqref{eleq}}{=}&(C^*+\alpha B^*yx)^{-1}(C^*+\gamma B^*yx)yz,
	\end{align*}
	where we denoted $C=I+\gamma zy$ and $B=I+\beta zy$ and $^*$ stands for taking a transpose (for future reference we also denote $A=I+ \alpha zy$).
	
	\bigskip
	Since $y_2=B^{-1}Cz$, referring to $x_3$, as defined in \eqref{oh}, we can write
	\begin{align}
		x_3=&B^{-1}Cz[I+\alpha (C^*+\alpha B^*yx)^{-1}(C^*+\gamma B^*yx)yz]^{-1}[I+\beta(C^*+\alpha B^*yx)^{-1}(C^*+\gamma B^*yx)yz]\nonumber\\
		=&B^{-1}Cz[C^*+\alpha B^*yx+\alpha C^*yz+\alpha\gamma B^*yxyz]^{-1}[C^*+\alpha B^*yx+\beta C^*yz+\beta\gamma B^*yxyz]\nonumber\\
		=&z(B^*)^{-1}C^*[(A^*+\alpha  B^*yx)C^*]^{-1}B^*[C^*+yx(\alpha+\beta\gamma yz)]\nonumber\\
		=&z(B^*)^{-1}[A^*+\alpha  B^*yx]^{-1}B^*[C^*+yx(\alpha+\beta\gamma yz)]\nonumber\\
		\stackrel{\eqref{eleq}}{=}&z[A^*+\alpha yx B^*]^{-1}[C^*+yx(\alpha+\beta\gamma yz)] \label{x3}
	\end{align}
	
	\bigskip
	Now we compute $X_3=X_2$ again referring to \eqref{oh}. That is
	\begin{align*}
	X_3=&z(I+\alpha X_1z)^{-1}(I+\gamma X_1z)\\
	=&z[I+\alpha y(I+\alpha xy)^{-1}(I+\beta xy)z]^{-1}\,[I+\gamma y(I+\alpha xy)^{-1}(I+\beta xy)z]\\
	\stackrel{\eqref{eleq}}{=}&z[I+\alpha (I+\alpha yx)^{-1}(I+\beta yx)yz]^{-1}\,[I+\gamma (I+\alpha yx)^{-1}(I+\beta yx)yz]\\
	\stackrel{\eqref{eleq}}{=}&z[I+\alpha yx+\alpha (I+\beta yx)yz]^{-1}\,[I+\alpha yx+\gamma (I+\beta yx)yz]\\
	=&z[A^*+\alpha yx B^*]^{-1}\,[C^*+yx(\alpha+\beta\gamma yz)].
	\end{align*}
	which, see \eqref{x3}, agrees with $x_3$.
	
	\subsection*{Proof of $z_3=Z_3$} $\,$
	
	\bigskip
	It follows by the argument analogous to one we used above in the case $x_3=X_3$ just by changing the roles of $x$ and $z$. 
	
	\subsection*{Proof of $y_3=Y_3$} $\,$
	
	\bigskip	
	Note that $y_2=y_1$ and $Y_2=Y_1$ . 
	Since $z_3=z_2$ and $X_3=X_2$, see \eqref{oh},  we have
	\begin{equation}\label{Eqxz}
	X_3=X_2=y_1(I+\alpha x_2y_1)^{-1}(I+\beta x_2y_1)
	\quad\mbox{and}\quad 	z_3=z_2=Y_1(I+\gamma Z_2Y_1)^{-1}(I+\beta Z_2Y_1).
	\end{equation}
	
The equality  $y_3=Y_3$ is equivalent to $y_3^*=Y_3^*$ which assumes the form
	\begin{equation}\label{eq1}
	(I+\alpha x_2y_1)(I+\beta x_2 y_1x_2)^{-1}x_2=(I+\gamma Z_2Y_1)(I+\beta Z_2Y_1)^{-1}Z_2.
	\end{equation}
	Referring to \eqref{Eqxz} we see that \eqref{eq1} is equivalent to
	$$
	X_2^{-1}y_1x_2=z_2^{-1}Y_1Z_2.
	$$
	
	\bigskip
	Using \eqref{oh} for $x_2,z_2,X_2,Z_2$ we rewrite the above as
	\begin{eqnarray}\label{eq2}
	(I+\gamma X_1 z)^{-1}(I+\alpha X_1z)z^{-1}y_1z_1(I+\alpha xz_1)^{-1}(I+\gamma xz_1)\\ =(I+\alpha z_1x)^{-1}(1+\gamma z_1x)x^{-1}Y_1X_1(I+\gamma zX_1)^{-1}(I+\alpha zX_1).\nonumber
	\end{eqnarray}
	
	\bigskip
	Referring to \eqref{oh} for $y_1,z_1,Y_1,X_1$ we get
	$$
	z^{-1}y_1z_1=y=x^{-1}Y_1X_1.
	$$
	Therefore, the left hand side of \eqref{eq2} assumes the form
	\begin{align*}
	&\left[I+\gamma y(I+\alpha xy)^{-1}(I+\beta xy)z\right]^{-1}\,\left[I+\alpha y(I+\alpha xy)^{-1}(I+\beta xy)z\right]\\
	y&\left[I+\alpha x y(I+\gamma zy)^{-1}(I+\beta zy)\right]^{-1}\left[I+\gamma xy(I+\gamma zy)^{-1}(I+\beta zy)\right]\\
	\stackrel{\eqref{eleq}}{=}&\left[I+\gamma (I+\alpha yx)^{-1}(I+\beta yx)yz\right]^{-1}\left[I+\alpha (I+\alpha yx)^{-1}(I+\beta yx)yz\right]\\
	y&\left[I+\gamma xy(I+\beta zy)(I+\gamma zy)^{-1}\right]\,\left[I+\alpha x y(I+\beta zy)(I+\gamma zy)^{-1}\right]^{-1}\\
	=&\left[I+\alpha yx+\gamma(I+\beta yx)yz\right]^{-1}\left[I+\alpha yx++\alpha(I+\beta yx)yz\right]\\
	y&\left[I+\gamma zy+\gamma xy(I+\beta zy)\right]\,\left[I+\gamma zy+\alpha xy(I+\beta zy)\right]^{-1}\\
	\stackrel{\eqref{eleq}}{=}&\left[I+\alpha yx+\gamma(I+\beta yx)yz\right]^{-1}\left[I+\alpha yx++\alpha(I+\beta yx)yz\right]\\
	&\left[I+\gamma yz+\gamma yx(I+\beta yz)\right]\,\left[I+\gamma yz+\alpha yx(I+\beta yz)\right]^{-1}\,y
\end{align*}
and the right hand side of \eqref{eq2} after analogous computation gives  
\begin{align*}
	&\left[I+\alpha y(I+\gamma zy)^{-1}(I+\beta zy)x\right]^{-1}\,\left[I+\gamma y(I+\gamma zy)^{-1}(I+\beta zy)x\right]\\
	y&\left[I+\gamma zy(I+\alpha xy)^{-1}(I+\beta xy)\right]^{-1}\,\left[I+\alpha zy(I+\alpha xy)^{-1}(I+\beta xy)\right]\\
	%=&(I+\alpha (I+\gamma yz)^{-1}(I+\beta yz)yx)^{-1}(I+\gamma (I+\gamma yz)^{-1}(I+\beta yz)yx)\\
	%y&(I+\alpha zy(I+\beta xy)(I+\alpha xy)^{-1})(I+\gamma zy(I+\beta xy)(I+\alpha xy)^{-1})^{-1}\\
	%=&(I+\gamma yz+\alpha(I+\beta yz)yx)^{-1}(I+\gamma yz+\gamma(I+\beta yz)yx)\\
	%y&(I+\alpha xy+\alpha zy(I+\beta xy))(I+\alpha xy+\gamma zy(I+\beta xy))^{-1}\\
	=&\left[I+\gamma yz+\alpha(I+\beta yz)yx\right]^{-1}\,\left[I+\gamma yz+\gamma(I+\beta yz)yx\right]\\
	&\left[I+\alpha yx+\alpha yz(I+\beta yx)\right]\,\left[I+\alpha yx+\gamma yz(I+\beta yx)\right]^{-1}\,y.
	\end{align*}
	
	\bigskip
	To finish the proof it will be convenient to denote $u=yx$ and $v=yz$. After multiplying  \eqref{eq2} by $y^{-1}$ from the right,  in view of commutation of inner two elements on both sides, \eqref{eq2} can be rewritten as
	\begin{align}\label{eq3}
	&(I+\alpha u+\gamma v+\beta\gamma uv)^{-1}(I+\gamma(u+v+\beta uv))(I+\alpha(u+v+\beta uv))(I+\alpha u+\gamma v+\alpha\beta uv)^{-1}\\
	=&(I+\alpha u+\gamma v+\alpha\beta vu)^{-1}(I+\alpha(u+v+\beta vu))(I+\gamma(u+v+\beta vu))(I+\alpha u+\gamma v+\beta\gamma vu)^{-1}.\nonumber
	\end{align}	
	Writing $s=u+v+\beta uv$ and taking side-wise inverse of \eqref{eq3}  we get
	\begin{align*}
	&(I+\alpha s+(\gamma -\alpha)v)(I+\alpha s)^{-1}(I+\gamma s)^{-1}(I+\gamma s+(\alpha-\gamma)u)\\
	=&(I+\gamma s^*+(\alpha-\gamma)u)(I+\gamma s^*)^{-1}(I+\alpha s^*)^{-1}(I+\alpha s^*+(\gamma -\alpha)v).
	\end{align*}
	Performing multiplication we get the equivalent form
	\begin{align*}
	&(\gamma-\alpha)v(I+\alpha s)^{-1}+(\alpha-\gamma)(I+\gamma s)^{-1}u-(\gamma-\alpha)^2v(I+\alpha s)^{-1}(I+\gamma s)^{-1}u\\
	&=(\alpha-\gamma)u(I+\gamma s^*)^{-1}+(\gamma-\alpha)(I+\alpha s^*)^{-1}v-(\gamma-\alpha)^2u(I+\gamma s^*)^{-1}(I+\alpha s^*)^{-1}v.
	\end{align*}
	So for $\alpha=\gamma$ the equality holds true. 
	
	\bigskip
	Now we assume that $\alpha\neq\gamma$ and, denoting $s_\kappa=(I+\kappa s)^{-1}$, $\kappa=\alpha,\beta$, we rewrite the above equality as 
		$$
	vs_\alpha-s_\gamma u-(\gamma-\alpha)vs_\alpha s_\gamma u=s_\alpha^*v-u s_\gamma^*-(\gamma-\alpha)u  s_\gamma^*s_\alpha^*v.
	$$

	Its left hand side can be written as
$$
v s_\alpha-s_\alpha u+[s_\alpha-s_\gamma]u-(\gamma-\alpha)vs_\alpha  s_\gamma u=vs_\alpha-s_\alpha u+(\gamma-\alpha)(s-v)s_\alpha s_\gamma u.
$$
	and its right hand side can be written as
$$
s_\alpha^*v-u s_\alpha^*+u[s_\alpha^*-s_\gamma^*]-(\gamma-\alpha)us_\gamma^*s_\alpha^*v=
		s_\alpha^*v-us_\alpha^*+(\gamma-\alpha)us_\gamma^*s_\alpha^*(s^*-v).
$$
	
	\bigskip
	Since $s-v=u(I+bv)$ and $s^*-v=(I+bv)u$ to finish the proof it suffices to show that
	\begin{equation}\label{eq5}
	vs_\alpha-s_\alpha u=s_\alpha^*v-us_\alpha^*\quad\mbox{and}\quad (1+bv)s_\alpha s_\gamma=s_\gamma^*s_\alpha^*(I+\beta v).
		\end{equation}																																																																																							 
		The second equation of \eqref{eq5} holds since  after taking inverses sidewise it follows directly from $(I+
			\beta v)s=s^*(I+\beta v)$ (applied twice). To see that the first equality in \eqref{eq5} holds  we rewrite it in an equivalent form
		$$
		(I+\beta v)s_\alpha-s_\alpha(I+\beta u)=s_\alpha^*(I+\beta v)-(I+\beta u)s_\alpha^*
		$$
		and refer to  $(I+\beta v)s=s^*(I+\beta v)$ and $s(I+\beta u)=(I+u)s^*$. 
		
		\bigskip
		Thus the proof of $Y_3=y_3$ is finished.
\end{proof}

\bigskip
{\bf Acknowledgement:}  We are grateful to  M. Sasada for comments on possible extensions of quadrirational YB maps to matricial domains. We thank Y. Goyotoku for the reference \cite{KasKouNie2025}. 

JW was supported by  National Science Center Poland [project no. 2023/51/B/ST1/01535].

\bigskip
$\,$
\end{document}